\newcommand{\Lc}{\mathcal{L}}
\newcommand{\Tc}{\mathcal{T}}
\newcommand{\Vc}{\mathcal{V}}
\newcommand{\Xc}{\mathcal{X}}
\newcommand{\Db}{\mathbb{D}}
\newcommand{\Rb}{\mathbb{R}}
\newcommand{\Zb}{\mathbb{Z}}
\newcommand{\vf}{\textbf{\textit{f}}}
\newcommand{\vx}{\textbf{\textit{x}}}
\newcommand{\vw}{\textbf{\textit{w}}}
\newcommand{\vu}{\textbf{\textit{u}}}
\newcommand{\vv}{\textbf{\textit{v}}}
\newcommand{\Beq}{\begin{equation}}
\newcommand{\Eeq}{\end{equation}}
\newcommand{\beq}{\begin{equation*}}
\newcommand{\eeq}{\end{equation*}}
\newcommand{\bal}{\begin{align}}
\newcommand{\eal}{\end{align}}
\newtheorem*{theor}{Theorem}
\newtheorem{thr}{Theorem}
\newtheorem{defn}{Definition}
\newtheorem{rem}{Remark}
\newtheorem{cor}{Corollary}
\newtheorem{lem}{Lemma}
\theoremstyle{definition}
\newtheorem{hypo}{Hypothesis}
\title{\vspace{-1cm} Reconstruction of scalar functions and vector fields\\ from weighted V-line transforms with swinging branches}
\author{Gaik Ambartsoumian\thanks{Department of Mathematics, University of Texas at Arlington, Arlington, TX, United States of America. \url{gambarts@uta.edu}}\and Rohit Kumar Mishra\thanks{Department of Mathematics, Indian Institute of Technology, Gandhinagar, Gujarat, India. \url{rohit.m@iitgn.ac.in}, \url{rohittifr2011@gmail.com}}
\and Indrani Zamindar\thanks{Department of Mathematics, Indian Institute of Technology, Gandhinagar, Gujarat, India. \url{indranizamindar@iitgn.ac.in}}}
\begin{document}
\date{}
\maketitle
\begin{abstract}
Weighted V-line transforms map a symmetric tensor field of order $m\ge0$ to a linear combination of certain integrals of those fields along two rays emanating from the same vertex. A significant focus of current research in integral geometry centers on the inversion of V-line transforms in formally determined setups. Of particular interest are the restrictions of these operators in which the vertices of integration trajectories can be anywhere inside the support of the field, while the directions of the pair of rays, often called branches of the V-line, are determined by the vertex location.
Such transforms have been thoroughly investigated when the branch directions are either constant or radial. In addition to that, in most of the prior research on this subject,  it was assumed that the weights of integration along each branch are the same. In this paper we analyze the transforms defined on scalar functions and vector fields, satisfying a much weaker assumption on the branch directions. The weights restriction is also lifted in all but one setup. Consequently, we extend multiple previously known results on the kernel description, injectivity, and inversion of the transforms with simplifying assumptions and prove pertinent statements for more general setups not studied before.
\end{abstract}
\section{Introduction}
During the last few decades, mathematicians have thoroughly investigated numerous inverse problems related to the recovery of tensor fields from various generalized X-ray transforms integrating along straight lines in $\Rb^n$ or along geodesics on a Riemannian manifold. The operators of interest include the longitudinal, transverse, mixed and momentum ray transforms of a tensor field, and the main efforts have focused on the study of injectivity, inversion formulas, null space, range characterizations, stability estimates, and support theorems of these operators (e.g. see \cite{abhishek2020support, abhishek2019support,   Boman_Sharafutdinov_Stability, Holman2013,  Francois_Jonnas_Review_2019, krishnan2019momentum,krishnan2020momentum,krishnan2019solenoidal, krishnan2009support, Rohit_Suman_2021,  Paternain_Salo_Uhlmann_review_2014, Paternain_Salo_Uhlmann_Book, Kamran_Otmar_Xray_2tensor, Kamran_Tamasan_Range, 
Sharafutdinov_Michal_Uhlmann_Regularity, Sharafutdinov_Book} and the references there). In higher dimensions ($n\ge 3$), the inversion problems for both longitudinal and transverse ray transforms are overdetermined. In such setups, researchers have studied formally determined problems of recovering the unknown field from restricted data sets satisfying miscellaneous conditions (e.g. see \cite{Denisjuk_Paper, krishnan2018microlocal, mishra2020full, Rohit_Chandni_transverse, Vertgeim2000}).

More recently, a new direction of generalizing ray transforms has become the focus of scientific inquiry. Motivated by imaging applications utilizing scattered particles, these transforms integrate along trajectories that contain ``a vertex'' or ``a corner'', which corresponds to the scattering location \cite{amb-book}. The operators of interest here include the divergent beam transform (DBT) mapping a symmetric tensor field of order $m\ge0$ to its integrals along half-lines \cite{Venky_Divergent_Beam_2025, kuchment2017inversion}, the V-line transforms (VLT) (also known as broken ray transforms (BRT)) and the star transforms defined, respectively, as linear combinations of a pair or more DBTs with a common vertex (e.g. see \cite{star-dual, amb-lat_2019, Amb_Lat_star, Gaik_Mohammad_Rohit, Gaik_Lat_Rohit_numerics, Gaik_Indrani_Rohit, Gaik_Indrani_Rohit_numerics, Ambartsoumian_Moon_broken_ray_article, ZSM-star-14} and the references therein), as well as various conical Radon transforms \cite{amb-lat_2019, Gouia_Amb_V-line, Palamodov2017}. A slightly different operator, also called a broken ray transform, integrates tensor fields along broken rays that reflect (possibly multiple times) from the boundary of one or more obstacles \cite{Ilmavirta_Parternain, Ilmavirta_Salo_2016,  Shubham_Manas_Jesse}. The rigorous definitions of the operators relevant to this article are presented in Section \ref{sec:def}.

Since the set of all divergent beams (i.e. half lines) in $\mathbb{R}^n$ has $2n-1$ dimensions, the inversion of each of these transforms from a complete data set is an overdetermined problem for any $n\ge 2$. Therefore, most of the research on this subject has concentrated on the study of the restricted versions of such operators with $n$ degrees of freedom. The latter can be informally split into two categories: transforms in which the vertices of integration trajectories are restricted to a hypersurface located \textit{outside} or on the boundary of the support of the field (e.g. see \cite{Moon_Haltmeir_Daniela, Anamika_Indrani_Rohit, Moon_Haltmeir_CRT_2017} and references therein), and transforms in which those vertices can be anywhere \textit{inside} the support of the field (e.g. see \cite{amb-lat_2019, Amb_Lat_star, Gaik_Mohammad_Rohit, Gaik_Lat_Rohit_numerics, Gaik_Indrani_Rohit, Gaik_Indrani_Rohit_numerics, Ambartsoumian_Moon_broken_ray_article, Gouia_Amb_V-line, Venky_Divergent_Beam_2025, kuchment2017inversion, Palamodov2017, ZSM-star-14}). The operators from the first group appear in the mathematical models of Compton cameras, while those from the second group play a prominent role in single scattering optical and X-ray tomographies \cite{amb-book}. The mathematical apparatus used to analyze these operators is also different for each category. For example, many problems related to the first group can be modified into the equivalent problems about (classical) ray transforms by continuing the data to the missing half-lines with appropriate symmetry conditions. Clearly, such tricks will not work for the transforms from the second group, making their study a more challenging endeavor. Our article deals with a large class of operators from the second group, generalizing the results of a host of previous works.

The transforms studied in this paper map a scalar or a vector field $f$ defined in $\mathbb{R}^2$  to its weighted integrals along various $2$-dimensional families of V-lines\footnote{A divergent beam transform can be expressed as a weighted V-line transform, where the weight along one of the branches is chosen to be zero.} with the following common features. Each point of the support of $f$ is a vertex of exactly one V-line of the given family. In other words, each vertex location $\vx$ uniquely identifies the directions $\vu(\vx)$ and $\vv(\vx)$ of the branches 
of the V-line emanating from that vertex. Therefore, the VLT can be parametrized by the coordinates $\vx$ of the vertices of its integration trajectories. Some prominent examples of such setups include translation invariant VLTs\footnote{In imaging applications this corresponds to linear arrays of photon emitters and detectors, each collimated in a single direction, i.e. $\vu(\vx)$ and $\vv(\vx)$ are constant. It is often said that the linear arrays have a focal point at infinity, while the circular arc detectors have a finite focal point at the center of the circle.} \cite{amb-lat_2019, Amb_Lat_star, Gaik_Mohammad_Rohit, Gaik_Lat_Rohit_numerics, Gaik_Indrani_Rohit, Gaik_Indrani_Rohit_numerics, FMS-PhysRev-10, FMS-09, gouia2014analytical, Gouia_Amb_V-line, Kats_Krylov-15}, rotation invariant VLTs \cite{amb_2012, Ambartsoumian_Moon_broken_ray_article,ambartsoumian2016numerical,  Rahul_tensor, Rahul_Rohit_Manmohan_vector, Sherson}, and VLTs appearing in imaging modalities using circular (arc) arrays of emitters and receivers (see Figure \ref{fig:Detector}) \cite{amb-book, Kats_Krylov-13}. We also assume that the integral curves of the vector fields $\vu(\vx)$ and $\vv(\vx)$ coincide with straight line segments in the image domain. For V-line branches representing the incidence field of radiation (e.g. corresponding to $\vu(\vx)$) the latter condition is necessary (see \cite{amb-book} for more details). For the V-line branches representing the scattered beam (corresponding to $\vv(\vx)$), that requirement is satisfied in all setups with linear and circular arrays of detectors discussed above, as well as in many other cases not considered before.
Given these fairly general assumptions, the methodology presented in this paper enables us to extend various results on the kernel description, injectivity, and inversion of the appropriate transforms obtained in \cite{amb-lat_2019, Gaik_Mohammad_Rohit, Venky_Divergent_Beam_2025,  Kats_Krylov-13, Sherson}, and prove pertinent statements for more general setups not studied before (see Section \ref{sec:main-results} for a detailed list).

\vspace{4mm}



\begin{figure}[h!]
    \centering
    \includegraphics[width=.8\linewidth]{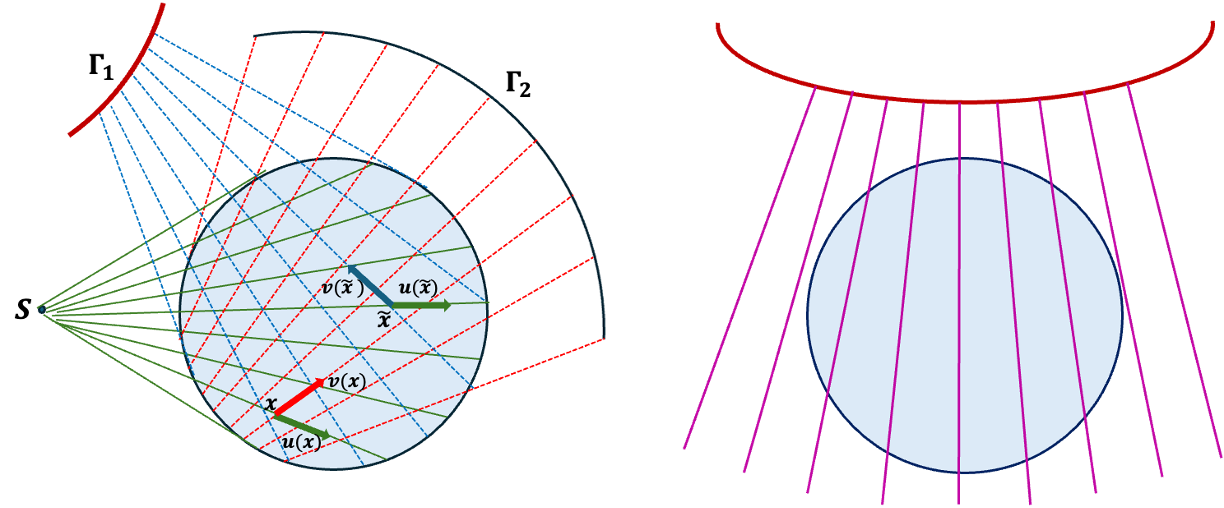}
    \caption{The sketch on the left describes a simple setup of single scattering tomography. The source $S$ emits radiation along certain rays. The single scattered photons are then captured by either a convex ($\Gamma_1$) or a concave ($\Gamma_2$) array of collimated detectors.  Concave-type detectors are often used in CT, while convex detectors are used in pin-hole cameras in nuclear imaging. Under certain assumptions, the knowledge of intensity of incoming and scattered radiation for each source-detector pair provides the VLT of the attenuation coefficient $\mu_t(\vx)$ of the medium (e.g. see \cite{amb-book}).\\
    It is easy to see that the integral curves of the corresponding vector fields $\vu(\vx)$ and $\vv(\vx)$ here are straight line segments. In particular, if the detector arrays are placed along circular arcs, then the resulting vector fields are focal. That is, for each detector $\Gamma_i$ there is a fixed point $\vx_0^i$ (the focus of $\Gamma_i$), such that the rays detected by the detector $\Gamma_i$ pass through $\vx_0^i$. In other words, for all $\vx$ inside the image domain, one can define the vector fields as $\displaystyle \vu(\vx) =  \frac{\vx_0^1 -  \vx}{|\vx_0^1 -  \vx|}$ and $\displaystyle \vv(\vx) =  \frac{\vx_0^2 -  \vx}{|\vx_0^2 -  \vx|}$.
    In the case when $\Gamma_1$ are $\Gamma_2$ are flat (i.e. when the foci of the detector arrays are at infinity), the vector fields $\vu$ and $\vv$ are constant.}    
\label{fig:Detector}
\end{figure}

\begin{rem}
    Almost all of the previously studied setups of single scattering tomography use either circular or linear arrays of detectors. In this work we substantially relax those restrictions, by only assuming that the integral curves of the vector fields $\vu(\vx)$ and $\vv(\vx)$ are straight line segments inside the image domain. Therefore, one can use arbitrary convex or concave detectors. The resulting integral curves of the vector fields of branch directions are \textit{rays that swing} along the surface of the detector  (see the sketch on the right in Figure \ref{fig:Detector}).    
\end{rem}


The rest of the paper is organized as follows. In Section \ref{sec:def} we introduce the notations, definitions, and assumptions about the transforms studied in the article. Section \ref{sec:main-results} enumerates the main results of this work in the form of an itemized list and two tables, which should help the reader navigate through the paper and quickly locate the desired theorems. Section \ref{sec: a = 0} describes the statements about the divergent beam transform and its moments acting on scalar functions, as well as the longitudinal and transverse divergent beam transforms defined on vector fields. In Section \ref{sec: a not 0 scalar} we consider the weighted V-line transform of a scalar function $h$ and present a method for its inversion. Section \ref{sec: a = 1} delineates the reconstruction of a vector field from its longitudinal/transverse V-line transforms and their first moments. Section \ref{sec: a not 0 vector} discusses the reconstruction of a vector field from various combinations of its weighted V-line transforms with constant branch directions $\vu$, $\vv$. We finish the paper with a description of open questions and future work in Section \ref{sec:remarks}, and acknowledgments in Section \ref{sec:acknowledge}. 


\vspace{20mm}

\section{Preliminaries}\label{sec:def}
\noindent Throughout the paper, we use bold font letters to denote vectors and regular font letters to denote scalars. Let $\Omega\subseteq\mathbb{R}^2$ be an open set, and $S^1(\Omega)$ be the space of vector fields defined on $\Omega$. We denote by $C_c^2(S^1; \Omega)$ the space of twice continuously differentiable, compactly supported vector fields on $\Omega$. Since we are interested in recovering scalar functions and vector fields of compact support, with appropriate scaling one can consider only those that are supported in the unit disc.
\vspace{2mm}

In all statements of this article we assume that the following conditions hold.
\begin{hypo}\label{Hypo} Let $\Db$ be the unit disc.
    \begin{itemize}
     \item At each $\vx\in \Db$, the vectors $\vu(\vx)$ and $\vv(\vx)$ are linearly independent and have unit length.
    \item The vector fields $\vu,\vv\in C^1(S^1;\Db)$, and their integral curves are straight line segments in $\Db$.
\end{itemize}
\end{hypo}

Below, we define a set of integral transforms for scalar functions and vector fields in $\Rb^2$, which are the primary objects of our study.  We start with introducing a weighted V-line transform of a scalar function $h$.


\begin{defn}\label{def:weighted V-line transform}
Let $\alpha\in\mathbb{R}$ 
and $h\in C_c^1(\Db)$. The (weighted) \textbf{V-line transform} of $h$ is defined as
\begin{align}\label{weighted_V-line}
\Vc_{\alpha}h(\vx) = \int_0^{\infty} h(\vx+t\vu(\vx))\,dt +\alpha \int_0^{\infty} h(\vx+t\vv(\vx))\, dt, \;\;\vx\in\Db.
\end{align} 
\end{defn}
\noindent This transform depends on the choice of vector fields $\vu$ and $\vv$, but we do not include them in the notation $\Vc_{\alpha}$ because $\vu$ and $\vv$ are assumed to be known, and it will be clear from the discussion what $\vu$, $\vv$ are in each particular case.
For $\alpha=0$, this transform reduces to the \textit{divergent beam transform}, which we will occasionally denote by $\Xc_\vu$ to emphasize the direction, that is, $\Xc_\vu h\equiv\Vc_0 h$.

\begin{defn}\label{def:focal divergent moment beam}
 Let $h\in C_c^1(\Db)$ and $k\ge 0$ be an integer. The \textbf{$k^{th}$ moment divergent beam transform} of $h$ is defined as
\begin{align}\label{focal_divergent_beam}
\Vc^k_{0}h(\vx) = \int_0^{\infty} t^kh(\vx+t\vu(\vx))\,dt, \;\;\vx\in\Db.
\end{align}   
\end{defn}

Next, we introduce a set of related integral transforms acting on a vector field $\vf$ in $\Rb^2$. As in the previous definition, we take the weight $\alpha$ to be a fixed real number, and $\vu$, $\vv$ be known vector fields, whose integral curves are straight line segments in $\Db$.


\begin{defn}
    Let $\vf \in C_c^1(S^1;\Db)$ and $k\ge 0$ be an integer. The \textbf{ $k^{th}$ moment longitudinal V-line transform} of $\vf$ is defined as
\begin{align}\label{mom_longi_def}
        \Lc_\alpha^k\vf(\vx)= -\int_0^\infty t^k \vu(\vx) \cdot \vf(\vx+t\vu(\vx))\, dt + \alpha\int_0^\infty t^k \vv(\vx) \cdot \vf(\vx+t\vv(\vx))\, dt, \;\;\vx\in\Db.
    \end{align}
\end{defn}

\begin{defn}
    Let $\vf \in C_c^1(S^1;\Db)$ and $k\ge 0$ be an integer. The \textbf{$k^{th}$ moment transverse V-line transform} of $\vf$ is defined as
\begin{align}\label{mom_trans_def}
        \Tc_\alpha^k\vf(\vx)= -\int_0^\infty t^k \vu^\perp(\vx) \cdot \vf(\vx+t\vu(\vx))\, dt + \alpha \int_0^\infty t^k \vv^\perp(\vx) \cdot \vf(\vx+t\vv(\vx))\, dt, \;\;\vx\in\Db,
    \end{align}
where $\vu^\perp (\vx)=(u_1(\vx), u_2(\vx))^\perp= (-u_2(\vx), u_1(\vx)).$    
\end{defn}
\begin{rem}
    When $k=0$, we call these transforms the longitudinal VLT and the transverse VLT, and denote them by $\Lc_\alpha\vf$ and $\Tc_\alpha\vf$.
\end{rem}

\noindent It is easy to observe by a simple calculation that $\displaystyle \Lc_\alpha\vf^{\perp} =- \Tc_\alpha\vf$ and $\Lc_\alpha^1\vf^{\perp} =- \Tc_\alpha^1\vf$. As 
in the definition of the (weighted) V-line transform, 
we do not include vector fields $\vu$ and $\vv$ in the notation of these transforms, since $\vu$ and $\vv$ are assumed to be known vector fields satisfying Hypothesis \ref{Hypo}.

\vspace{0.5cm}

\noindent 
For a scalar function $h$  and a vector field $\vf =(f_1,f_2)$, we use the following notations 
\begin{align}\label{eq: definition of div and curl}
\nabla h = \left(\frac{\partial h}{\partial x_1}, \frac{\partial h}{\partial x_2}\right), \ \  \nabla^\perp h = \left(-\frac{\partial h}{\partial x_2}, \frac{\partial h}{\partial x_1}\right), \ \    \delta \vf =  \frac{\partial f_1}{\partial x_1}+ \frac{\partial f_2}{\partial x_2},\ \  
\delta^\perp \vf =  \frac{\partial f_2}{\partial x_1}- \frac{\partial f_1}{\partial x_2}.
\end{align}
For given vector fields $\vu(\vx),\vv(\vx)$ satisfying Hypothesis \ref{Hypo}, we define 
\begin{align}\label{eq:definition c_uv}
    c_{uv}(\vx):=\vu(\vx)\cdot \vv(\vx) \quad \mbox{and} \quad 
    c^{\perp}_{uv}(\vx):=\vu(\vx)\cdot \vv^\perp(\vx), \quad    \mbox{ where } \vv^\perp(\vx)=(-v_2(\vx),v_1(\vx)).
\end{align}
It is easy to verify that $\displaystyle c_{vu}(\vx)=c_{uv}(\vx)$ and $c^\perp_{vu}(\vx)=-c^{\perp}_{uv}(\vx)$. Let $D_\vu=\vu \cdot \nabla$, and $D^\perp_\vu= \vu^\perp \cdot \nabla$ denote the directional derivatives in the directions $\vu$ and $\vu^\perp$, respectively. One can check by a direct calculation that
\begin{align}
 D_\vu = c_{uv}D_\vv &+ c^\perp_{uv}D^\perp_\vv \quad &\mbox{and} \qquad \quad \ \  D_\vv = c_{vu}D_\vu &+c^\perp_{vu}D^\perp_\vu\label{decomp: D_u and D_v}\\
  D_\vu D^\perp_\vu -  D^\perp_\vu D_\vu &=- \left(\delta \vu\right)D^\perp_\vu \quad  &\mbox{and} \quad D_\vv D^\perp_\vv -  D^\perp_\vv D_\vv = &- \left(\delta \vv\right) D^\perp_\vv.\label{eq:D_uD^perp_u and D_vD^perp_v}
\end{align}
Since the vector fields $\vu(\vx)$, $\vv(\vx)$ are of unit length for every $\vx \in \Db$, and their integral curves are straight line segments in $\Db$, we have the following useful relations for the operators defined above.  
\begin{align}\label{D_u(u)=0}
 D_\vu \vu(\vx) =0,  \quad \quad
     D_\vu \vu^\perp(\vx) =0, \quad \mbox{and} \quad  D^\perp_\vu \vu(\vx) = \left(\delta \vu\right)(\vx)\;\vu^\perp(\vx).\\
       D_\vv \vv(\vx) =0, \quad \quad
       D_\vv \vv^\perp(\vx) =0, \quad \mbox{and } \quad D^\perp_\vv \vv (\vx)=  \left(\delta \vv\right)(\vx)\;\vv^\perp(\vx).
\end{align}
We also have 
\begin{align}
    D_\vv c_{uv}&= \left(\delta \vu\right)(c^\perp_{vu})^2,\quad \qquad \qquad  D_\vv c^\perp_{uv} = \left(\delta \vu\right) c^\perp_{vu}c_{uv},\label{eq:D_v(c_uv)}\\
   D_\vu \vv &= c^\perp_{uv}\left(\delta \vv\right) \vv^\perp,\quad \qquad \qquad D_\vu \vv^\perp = -c^\perp_{uv}\left(\delta \vv\right) \vv,\label{eq:D_u(v)}\\
    D_\vv \vu &= c^\perp_{vu}\left(\delta \vu\right) \vu^\perp,\quad \qquad \qquad D_\vv \vu^\perp = -c^\perp_{vu}\left(\delta \vu\right)\vu\label{eq:D_v(u)}. 
\end{align}
Finally, in our derivations we will need the following well-known decomposition result.
\begin{theor}[\cite{derevtsov3}]
For any $\vf \in C^2_c(S^1;\Db)$, there exist unique smooth functions $\varphi$ and $\psi$ such that 
\begin{align}\label{eq:decomposition}
    \vf= \nabla\varphi+ \nabla^{\perp}\psi, \quad \varphi|_{\partial \Db}=0, \quad \psi|_{\partial \Db}=0. 
\end{align}
\end{theor}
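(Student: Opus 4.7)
The plan is to reduce the decomposition to two decoupled Dirichlet problems. Applying $\delta$ and $\delta^{\perp}$ to the desired identity $\vf = \nabla\varphi + \nabla^{\perp}\psi$, and using the elementary identities $\delta\nabla\varphi = \Delta\varphi$, $\delta^{\perp}\nabla\varphi = 0$, $\delta\nabla^{\perp}\psi = 0$, and $\delta^{\perp}\nabla^{\perp}\psi = \Delta\psi$, the problem reduces to solving
\[
\Delta \varphi = \delta \vf \text{ in } \Db, \quad \varphi|_{\partial \Db} = 0, \quad\text{and}\quad \Delta \psi = \delta^{\perp}\vf \text{ in } \Db, \quad \psi|_{\partial \Db} = 0.
\]
By the standard theory of the Dirichlet problem on the disc (e.g.\ via the explicit Green's function), each of these admits a unique solution; elliptic regularity combined with $\vf \in C_c^2(S^1;\Db)$ ensures $\varphi$ and $\psi$ are smooth with the prescribed vanishing boundary traces.

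Next I would verify the identity $\vf = \nabla\varphi + \nabla^{\perp}\psi$. Setting $\vw := \vf - \nabla\varphi - \nabla^{\perp}\psi$, a direct computation yields $\delta \vw = 0$ and $\delta^{\perp}\vw = 0$ in $\Db$. Thus both components of $\vw$ are harmonic, and more strongly the complex function $W := w_1 - i w_2$ satisfies the Cauchy--Riemann equations and is holomorphic on $\Db$. To conclude $\vw \equiv 0$, I would exploit the compact support of $\vf$ inside $\Db$: in an annular neighborhood of $\partial \Db$ one has $\vf = 0$, so $\vw = -\nabla\varphi - \nabla^{\perp}\psi$ there, with $\varphi$ and $\psi$ harmonic in the annulus and vanishing on $\partial \Db$. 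A Fourier-series analysis of the boundary trace of $W$ on $\partial \Db$, combined with the Hardy-space characterization of boundary values of holomorphic functions on the disc, then forces $W \equiv 0$, yielding $\vw \equiv 0$. Uniqueness follows by applying the same reasoning to the difference of any two putative decompositions: that difference has $\vf = 0$, so the associated Dirichlet problems force both scalar potentials to vanish identically.

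The main obstacle is the last verification that $\vw \equiv 0$. Writing down and solving the two Poisson problems is routine, but showing that their Dirichlet solutions precisely reconstruct $\vf$, rather than leaving a nontrivial harmonic residue $\vw$, is delicate: it requires combining the compact support of $\vf$ inside $\Db$, the zero boundary data on $\varphi$ and $\psi$, and the holomorphicity of $W = w_1 - i w_2$ into a rigorous boundary-value argument on the unit circle.
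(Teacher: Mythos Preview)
The paper does not prove this statement; it is quoted without proof as a known result from the reference \cite{derevtsov3}, so there is no argument in the paper to compare yours against.

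Your reduction to the two Dirichlet problems and your uniqueness argument are correct, and you rightly single out the verification $\vw\equiv 0$ as the crux. The difficulty, however, is not merely technical: as the theorem is stated (with \emph{both} $\varphi|_{\partial\Db}=0$ and $\psi|_{\partial\Db}=0$), this step actually fails. The subspaces $\nabla H_0^1(\Db)$, $\nabla^\perp H_0^1(\Db)$, and the space of harmonic (divergence- and curl-free) vector fields are pairwise $L^2$-orthogonal, and the last space contains every constant field. If $\vf=\nabla\varphi+\nabla^\perp\psi$ with $\varphi,\psi$ vanishing on $\partial\Db$, then by the divergence theorem $\int_{\Db} f_1\,d\vx=\int_{\Db}\partial_{x_1}\varphi-\int_{\Db}\partial_{x_2}\psi=0$. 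Thus $\vf(\vx)=\eta(|\vx|)\,(1,0)$ with $\eta\ge 0$ a nontrivial compactly supported bump is a counterexample: it lies in $C_c^2(S^1;\Db)$ but has $\int_{\Db} f_1>0$. For this $\vf$, carrying out your construction explicitly gives $\vw$ equal to a nonzero constant vector, so no Hardy-space or Fourier argument can force $\vw\equiv 0$.

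The standard tensor-tomographic decomposition imposes the zero trace only on the potential $\varphi$; the solenoidal part is then $\nabla^\perp\psi$ with $\psi$ determined only up to an additive constant. That weaker version is what the cited source presumably provides, and it is what the subsequent arguments in the paper actually need (e.g.\ in Corollary~\ref{coro: potential recovery} and Theorem~\ref{th:Inversion of L and L1 V-line} one solves for $\psi$ via $\Delta\psi=\delta^\perp\vf$ with a chosen boundary condition and then handles the gradient part separately). Your proof plan would go through cleanly for that corrected statement, with the residue $\vw$ disappearing because only one Dirichlet condition is imposed and the other potential absorbs the harmonic remainder.
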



\section{Listing of the main results}\label{sec:main-results}

The primary goal of this article is to analyze the injectivity and inversion of the transforms defined in Section \ref{sec:def}. In this section, we briefly describe our main results and compare them with previously known statements in more restrictive setups. We break the discussion into the following parts: 
\begin{itemize}
    \item $\alpha = 0$ (Section \ref{sec: a = 0}). This case corresponds to DBTs of scalar functions and vector fields. The inversion of the $k$-th moment DBT of scalar functions is rather straightforward. The vector field setup is more involved, due to the presence of non-trivial kernels. In \cite{Venky_Divergent_Beam_2025}, the authors have derived a method for recovery of symmetric $m$-tensor fields in $\Rb^n$ from a set of $k$-th moments DBTs, when the vector field $\vu$ is constant. Here, we work with a variable vector field $\vu$, whose integral curves are straight line segments. In the case of a constant $\vu$ in $\mathbb{R}^2$, our reconstruction results for vector fields differ from those obtained in \cite{Venky_Divergent_Beam_2025}, as we use longitudinal and transverse DBTs. For more details, see the discussion in Section \ref{sec: a = 0}.
    

    \item $\alpha \neq  0$ for \textit{scalar functions} with \textit{variable $\vu,\vv$} (Section \ref{sec: a not 0 scalar}). When  $\alpha$ tends to $0$, the results here coincide with those discussed in Section \ref{sec: a = 0}. The inversion in this general case is more complicated than in the previously known special cases. When vector fields $\vu$, $\vv$ are constant, this case corresponds to the weighted V-line transforms of scalar functions discussed in \cite{amb-lat_2019}.

     \item $\alpha = 1$ for \textit{vector fields} with \textit{variable $\vu,\vv$} (Section \ref{sec: a = 1}). In the case of constant $\vu,\vv$, our recovery of 
     vector fields coincides with the formulas derived in \cite{Gaik_Mohammad_Rohit}. 

    \item $\alpha\neq 0, 1$ for \textit{vector fields} with \textit{constant} $\vu$, $\vv$ (Section \ref{sec: a not 0 vector}). 
    When $\alpha$ tends to $0$ or $1$, the results obtained here coincide with the results listed in the first two bullet points above for constant vector fields $\vu$ and $\vv$. An inversion of a related operator (the star transform for vector fields) is derived in \cite{Gaik_Mohammad_Rohit}, but the approach there is different from the one obtained in this paper.    
\end{itemize}

\noindent The following three tables are intended to help the reader navigate through the text of the article. 

\vspace{1mm}

\begin{center}
\begin{tabular}{|p{5.5cm}|p{4.5cm}|p{5.97cm}|}
\hline
\centering $\alpha = 0$ & 
\centering $\alpha = 1$ & 
\centering $\alpha \neq 0,1$ \tabularnewline
\hline

\centering $k^{th}$ moment divergent beam transform for scalar fields, where $k \in \{0\}\bigcup\mathbb{N}$ \\
($\Vc_0^k$) & 
\centering V-line transform for scalar fields \\
($\Vc_1$) & 
\centering Weighted V-line transform for scalar fields \\
($\Vc_\alpha$) \tabularnewline
\hline

\centering Longitudinal and transverse divergent beam transforms for vector fields \\
($\Lc_0$, $\Tc_0$) &
\centering Longitudinal and transverse V-line transforms and their $1^{st}$ moments for vector fields \\
($\Lc_1$, $\Tc_1$, $\Lc_1^1$, $\Tc_1^1$) &
\centering Longitudinal and transverse weighted V-line transforms and their $1^{st}$ moments for vector fields with constant $\vu$, $\vv$ \\
($\Lc_\alpha$, $\Tc_\alpha$, $\Lc_\alpha^1$, $\Tc_\alpha^1$) \tabularnewline
\hline
\end{tabular}
\captionof{table}{The list of integral transforms considered in this work.}
\end{center}


\vspace{3mm}

\begin{center}
\begin{minipage}[t]{0.65\linewidth}
\centering
\begin{tabular}{|c|c|c|c|c|c|c|c|c|c|}
\hline
Recovery & \multicolumn{2}{c|}{$h$} & \multicolumn{7}{c|}{$\vf$} \\
\hline
Data set & 
$\Vc_0^k h$ & 
$\Vc_{\alpha} h$ & 
\shortstack{$\Lc_0 \vf$ \\ $\Tc_0 \vf$} & 
\shortstack{$\Lc_1 \vf$ \\ $\Tc_1 \vf$} & 
\shortstack{$\Lc_1 \vf$ \\ $\Lc_1^1 \vf$} & 
\shortstack{$\Tc_1 \vf$ \\ $\Tc_1^1 \vf$} & 
\shortstack{$\Lc_{\alpha} \vf$ \\ $\Tc_{\alpha} \vf$} & 
\shortstack{$\Lc_{\alpha} \vf$ \\ $\Lc_{\alpha}^1 \vf$} & 
\shortstack{$\Tc_{\alpha} \vf$ \\ $\Tc_{\alpha}^1 \vf$} \\
\hline
Theorem & 
\ref{Div_beam_focal} & 
\ref{scalar_focal_rec} & 
\ref{th:inversion of L and T DDBT} & 
\ref{th:Inversion of L and T V-line} & 
\ref{th:Inversion of L and L1 V-line} & 
\ref{th:Inversion of T and T1 V-line} & 
\ref{th:f from La and Ta} & 
\ref{Vector_rec from La and La1} & 
\ref{Vector_rec from Ta and Ta1} \\
\hline
\end{tabular}
\end{minipage}
\hspace{1.55em} 
\begin{minipage}[t]{0.3\linewidth}
\centering
\begin{tabular}{|c|c|c|}
\hline
\multicolumn{3}{|c|}{Kernel Description} \\ 
\hline
\shortstack{$\Lc_0 \vf$ \\ $\Tc_0 \vf$}  &\shortstack{$\Lc_1 \vf$ \\ $\Tc_1 \vf$}  & \shortstack{$\Lc_{\alpha} \vf$ \\ $\Tc_{\alpha} \vf$} \\
\hline
\ref{th:kernel of L and T DDBT}& \ref{th:kernel of L and T V-line} & \ref{th:kernel of L_alpf} \\
\hline
\end{tabular}
\end{minipage}
\captionof{table}{The list of theorems about the recovery of unknown scalar or vector fields from various combinations of those operators and theorems about the kernel descriptions.}
\end{center}

\section{Divergent beam transforms (\texorpdfstring{$\alpha = 0$}{})}\label{sec: a = 0} 
When $\alpha=0$, the weighted VLT has no contribution from the integral along $\vv$, and the transform integrates only along one ray (in the direction $\vu$) starting from the vertex $\vx$. 
The resulting operator coincides with the divergent beam transform. This section is devoted to the study of injectivity and invertibility of the DBT and its moments for scalar functions and vector fields.

A scalar function $h$ can be recovered from any $k$-th moment DBT with a variable vector field of directions $\vu$. Namely,
\begin{thr}\label{Div_beam_focal}
    For any fixed $\displaystyle k \in \Zb_+\cup\{0\}$, $\Vc_0^k h$ determines $h\in C_c^1(\Db)$ uniquely and explicitly.
\end{thr}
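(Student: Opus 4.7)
The plan is to derive a first-order differential recurrence $D_\vu \Vc_0^k h = -k\,\Vc_0^{k-1}h$ (with $D_\vu \Vc_0^0 h = -h$) and then iterate, obtaining the explicit inversion formula
\[
h(\vx)=\frac{(-1)^{k+1}}{k!}\,D_\vu^{\,k+1}\,\Vc_0^k h(\vx).
\]
The crucial structural input is Hypothesis \ref{Hypo}: since the integral curves of $\vu$ are straight line segments, $\vu$ is constant along each such curve, i.e.\ $\vu(\vx+s\vu(\vx))=\vu(\vx)$ for all admissible $s$. I will use this in the very first step to turn the spatial directional derivative of the integrand into an ordinary $t$-derivative.

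First I would compute $D_\vu\,\Vc_0^k h(\vx)=\vu(\vx)\cdot\nabla_\vx\!\int_0^\infty t^k h(\vx+t\vu(\vx))\,dt$ by differentiation under the integral sign, interpreting $D_\vu$ as $\tfrac{d}{d\epsilon}\big|_{\epsilon=0}$ of the shift $\vx\mapsto\vx+\epsilon\vu(\vx)$. Using $\vu(\vx+\epsilon\vu(\vx))=\vu(\vx)$, the integrand becomes $h(\vx+(t+\epsilon)\vu(\vx))$, so that $D_\vu[h(\vx+t\vu(\vx))]=\tfrac{d}{dt}h(\vx+t\vu(\vx))$. Next I integrate by parts in $t$:
\[
D_\vu \Vc_0^k h(\vx)=\int_0^\infty t^k\,\frac{d}{dt}h(\vx+t\vu(\vx))\,dt=\Bigl[t^k h(\vx+t\vu(\vx))\Bigr]_0^\infty-k\int_0^\infty t^{k-1}h(\vx+t\vu(\vx))\,dt.
\]
The boundary term at $t=\infty$ vanishes because $h\in C_c^1(\Db)$; at $t=0$ it vanishes for $k\ge 1$, while for $k=0$ it contributes $-h(\vx)$. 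This yields the recurrence $D_\vu \Vc_0^k h=-k\,\Vc_0^{k-1}h$ for $k\ge 1$ and $D_\vu \Vc_0^0 h=-h$.

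Applying the recurrence $k$ times reduces $\Vc_0^k h$ to $\Vc_0^0 h$ up to the factor $(-1)^k k!$, and one additional application of $D_\vu$ returns $h$, giving the displayed formula. This simultaneously establishes injectivity (the formula recovers $h$ from $\Vc_0^k h$) and explicit invertibility. The only potential subtlety I foresee, and the step that deserves care in the write-up, is justifying differentiation under the integral and the vanishing of boundary terms: both follow from $h\in C_c^1(\Db)$ together with the straight-line hypothesis on $\vu$, which ensures the map $t\mapsto h(\vx+t\vu(\vx))$ is $C^1$ with compact support in $t$ for every $\vx$. Everything else is routine induction on $k$.
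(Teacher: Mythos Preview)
Your proposal is correct and follows the same approach as the paper: apply $D_\vu$ to lower the moment order (using the straight-line hypothesis $\vu(\vx+s\vu(\vx))=\vu(\vx)$ to convert the directional derivative into a $t$-derivative), then iterate. In fact your recurrence $D_\vu\Vc_0^k h=-k\,\Vc_0^{k-1}h$ and inversion $h=\frac{(-1)^{k+1}}{k!}D_\vu^{\,k+1}\Vc_0^k h$ are more carefully stated than the paper's version, which drops the factor $k$ in the recurrence and consequently writes $h=(-1)^k D_\vu^k\Vc_0^k h$ without the factorial and with one fewer derivative.
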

\begin{proof}
    The following relations hold by the fundamental theorem of calculus: 
    \begin{align*}
        D_\vu \Vc_0^0 h(\vx) =  -  h(\vx),\;\;
        D_\vu \Vc_0^k h(\vx) =  - k \Vc_0^{k-1} h(\vx), \;\;k\ge1.
    \end{align*}
   Applying $D_\vu$ repeatedly $k+1$ times produces the expression
     \begin{align}
       h(\vx) =  \frac{(-1)^{k+1}}{k!} D^{k+1}_\vu \Vc_0^k h(\vx),
    \end{align}
which concludes the claim. 
\end{proof}

In a recent work \cite{Venky_Divergent_Beam_2025}, the authors showed for $m\ge1$ the possibility of recovering a symmetric $m$-tensor field in $\Rb^n$ from a set of $k$-th moment longitudinal DBTs corresponding to different constant vector fields $\vu$. For $m=1$, their work presents a componentwise reconstruction of a vector field from a single moment along two different fixed directions. Their approach is based on the recovery of the projection of the unknown vector field along the direction of integration from the given data. Hence, in $\mathbb{R}^2$, one can recover the unknown vector field from its $k$-th moment longitudinal DBT along two linearly independent directions. In our case, $\vu$ does not have to be constant. Instead, we consider a weaker condition, that the integral curves of $\vu$ are straight line segments. But even if we choose $\vu$ to be constant, our results for vector fields are different from those in \cite{Venky_Divergent_Beam_2025}, since we use two different (longitudinal and transverse) DBTs in a single fixed direction.

Next, we discuss the injectivity and inversion of the longitudinal and transverse DBT defined on vector fields in $\mathbb{R}^2$. We show that the kernels of these integral transforms are non-trivial, and an unknown vector field can be recovered from a combination of those transforms. In some special cases (e.g. for conservative or solenoidal vector fields), only one of the transforms is needed.
\begin{thr}[Kernel Description]\label{th:kernel of L and T DDBT}
    Let $\vf\in C^1_c(S^1;\Db)$. Then, we have 
    \begin{itemize}
        \item[(i)] $\Lc_0\vf=0$ if and only if $\displaystyle \vf = \varphi\vu^\perp$, for some $\varphi \in C_c^1(\Db)$.
        \item[(ii)]$\Tc_0\vf=0$ if and only if $\displaystyle \vf = \varphi\vu$, for some $\varphi \in C_c^1(\Db)$.
    \end{itemize}
\end{thr}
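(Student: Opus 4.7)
The plan is to reduce both parts to the pointwise statement $\vu\cdot\vf\equiv 0$ (resp.\ $\vu^\perp\cdot\vf\equiv 0$) via the fundamental theorem of calculus along the integral curves of $\vu$. The key kinematic fact I will exploit is that $\vu(\vx)$ is constant along its own integral curve: since the integral curves are straight line segments and $|\vu|=1$, one has $\vu(\vx+t\vu(\vx))=\vu(\vx)$ and $\vu^\perp(\vx+t\vu(\vx))=\vu^\perp(\vx)$ for all admissible $t$. This is the same content as the identities $D_\vu\vu=0$ and $D_\vu\vu^\perp=0$ recorded in \eqref{D_u(u)=0}.

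The easy ``if'' directions are direct: if $\vf=\varphi\vu^\perp$, then at every point $\vy=\vx+t\vu(\vx)$ we have $\vu(\vx)\cdot\vf(\vy)=\vu(\vy)\cdot\vf(\vy)=\varphi(\vy)\,\vu(\vy)\cdot\vu^\perp(\vy)=0$, so the integrand in \eqref{longi_def} (with $\alpha=0$) vanishes identically and $\Lc_0\vf=0$. The same one-line computation, with $\vu$ replaced by $\vu^\perp$ in the integrand, handles (ii).

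For the ``only if'' direction of (i), I would set $g(\vy):=\vu(\vy)\cdot\vf(\vy)$, a compactly supported $C^1$ function. Using the constancy of $\vu$ along its integral curve, I can rewrite
\begin{equation*}
\Lc_0\vf(\vx)=-\int_0^\infty g(\vx+t\vu(\vx))\,dt.
\end{equation*}
Next I apply $D_\vu$ to both sides. The crucial chain-rule identity is
\begin{equation*}
\frac{d}{dt}\,g(\vx+t\vu(\vx))=\vu(\vx)\cdot\nabla g(\vx+t\vu(\vx))=D_\vu g(\vx+t\vu(\vx)),
\end{equation*}
because $\vu(\vx+t\vu(\vx))=\vu(\vx)$. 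A direct differentiation under the integral (legitimate since $g$ is compactly supported) then yields $D_\vu\Lc_0\vf(\vx)=g(\vx)$, where the boundary term at $t=\infty$ vanishes by compact support. Consequently $\Lc_0\vf\equiv 0$ forces $\vu\cdot\vf\equiv 0$ pointwise; since $\{\vu,\vu^\perp\}$ is an orthonormal frame, $\vf$ must be proportional to $\vu^\perp$, giving $\vf=\varphi\vu^\perp$ with $\varphi=\vf\cdot\vu^\perp$, which inherits compact support in $\Db$ and the appropriate regularity from $\vf$ and $\vu^\perp$. Part (ii) proceeds identically after replacing $g$ by $h(\vy):=\vu^\perp(\vy)\cdot\vf(\vy)$ and using $D_\vu\vu^\perp=0$ to move $\vu^\perp(\vx)$ inside the chain rule; the outcome is $D_\vu\Tc_0\vf=h$, forcing $\vu^\perp\cdot\vf\equiv 0$ and hence $\vf=\varphi\vu$.

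I do not anticipate a substantial obstacle: the argument is essentially one application of the fundamental theorem of calculus, and everything hinges on the ``line-constancy'' of $\vu$ that justifies pulling $\vu(\vx)$ inside or out of the integrand at will. The only mildly delicate point is bookkeeping the regularity of the recovered scalar $\varphi$, which follows from the $C^1$ regularity of $\vu,\vu^\perp$ together with $\vf\in C_c^2$.
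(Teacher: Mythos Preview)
Your proposal is correct and follows essentially the same approach as the paper: differentiate $\Lc_0\vf$ (resp.\ $\Tc_0\vf$) along $\vu$ and use the fundamental theorem of calculus together with the line-constancy $\vu(\vx+t\vu(\vx))=\vu(\vx)$ to recover $\vu\cdot\vf$ (resp.\ $\vu^\perp\cdot\vf$) pointwise, then handle the converse by the same constancy observation. Your write-up is more explicit about the chain rule and the regularity bookkeeping for $\varphi$, but there is no substantive difference in strategy.
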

\begin{proof}
    \begin{itemize}
        \item [\textit{(i)}] Suppose $\Lc_0\vf=0$. Then differentiating $\Lc_0\vf$ in the direction of $\vu$ we get $D_{\vu}\Lc_0\vf=0$, which implies $\langle\vf,\vu \rangle=0.$ Hence we have $\vf=\varphi \vu^\perp$ for some scalar function $\varphi \in C_c^1(\Db).$\\
    Conversely, let $\vf=\varphi \vu^\perp$, then 
    \begin{align*}
        \Lc_0\vf(\vx) &= -\int_0^{\infty} \vu(\vx) \cdot\{\varphi(\vx+t\vu(\vx)) \vu^\perp(\vx+t\vu(\vx)) \}\,dt\\
        &=  -\int_0^{\infty} \varphi(\vx+t\vu(\vx))
        \{ \vu(\vx) \cdot\vu^\perp(\vx+t\vu(\vx)) \}\,dt
    \end{align*}
    Since integral curves of the vector field $\vu(\vx)$ are straight lines, we have $\vu(\vx+t\vu(\vx))=\vu(\vx)$, which implies  $\vu^\perp(\vx+t\vu(\vx))=\vu^\perp(\vx)$. Using this property, we get $\Lc_0\vf(\vx)=0$.

    \item[\textit{(ii)}] Suppose $\Tc_0\vf=0$. Then we have $D_{\vu}\Tc_0\vf=0$, which implies $\langle\vf,\vu^\perp \rangle=0.$ Thus we get $\vf=\varphi \vu$ for some scalar function $\varphi \in C_c^1(\Db).$\\
Next, if $\vf=\varphi \vu$, then 
\begin{align*}
        \Tc_0\vf(\vx)  &=  -\int_0^{\infty} \varphi(\vx+t\vu(\vx))
        \{ \vu^\perp(\vx) \cdot\vu(\vx+t\vu(\vx)) \}\,dt\\
        &= -\int_0^{\infty} \varphi(\vx+t\vu(\vx))
        \{ \vu^\perp(\vx) \cdot\vu(\vx) \}\,dt=0. \qedhere
    \end{align*}    
 \end{itemize}    
\end{proof}

\begin{thr}[Inversion]\label{th:inversion of L and T DDBT}
Let $\vf\in C^1_c(S^1;\Db)$. Then for all $\vx \in \Db$, we have the following formulas:
\begin{enumerate}
    \item[(i)] $\displaystyle \vf\ (\vx) =  D_\vu \Lc_0 \vf(\vx)\,\vu(\vx)  +  D_\vu \Tc_0 \vf\ (\vx)\,\vu^\perp(\vx)$.
    \item[(ii)] Consider a conservative vector field $\displaystyle \vf =  \nabla \varphi$ (or a solenoidal vector field $\vf=\nabla^\perp \varphi$), 
    $\varphi \in C_c^2(\Db)$. The potential function $\varphi$ can be recovered from 
    $\Lc_0 \vf$ (respectively, from $\Tc_0 \vf$).
\end{enumerate}
\end{thr}
\begin{proof}
     Differentiating $\Lc_0\vf$ and $\Tc_0\vf$ in the direction $\vu$ yields
    \begin{align}
        D_{\vu}\Lc_0\vf &=\vu\cdot \vf,\label{du_Df}\\
        D_{\vu}\Tc_0\vf &=\vu^\perp \cdot \vf.\label{du_D_perpf}
    \end{align}
   The first statement then follows from a decomposition of $\vf$ along vectors $\vu$ and $\vu^\perp$.
   
    To prove the second statement, let us assume $\vf = \nabla \varphi$. Then 
    \begin{align}
        \Lc_{0}\vf (\vx) =- \int_0^\infty \vu(\vx) \cdot \nabla \varphi (\vx+t\vu(\vx)) \,dt = - \int_0^\infty \frac{d}{dt}\varphi (\vx+t\vu(\vx)) \,dt = \varphi (\vx).
    \end{align}
    The case $\vf = \nabla^\perp \varphi$ follows similarly using $\Tc_0 \vf$ instead of $\Lc_{0}\vf$.
\end{proof} 
We will see later that for $\alpha\neq0$, the longitudinal/transverse V-line transform and its first integral moment determine $\vf$ uniquely. As opposed to this, for $\alpha=0$, the integral moments do not give any new information as discussed in the lemma below. 
\begin{lem}Let $\vf\in C^1_c(S^1;\Db)$ and $k\ge 0$ be an integer. Then for all $\vx \in \Db$, we have the following relations: 
\begin{enumerate}
    \item[(i)] The integral data $\Lc_0^{k} \vf(\vx)$ and $\Lc_0^{k+1}\vf(\vx)$ are equivalent. 
    \item[(ii)] The integral data $\Tc_0^{k} \vf(\vx)$ and $\Tc_0^{k+1}\vf(\vx)$ are equivalent. 
\end{enumerate} 
\end{lem}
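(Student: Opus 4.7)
My plan is to establish the two equivalences by giving explicit formulas translating each transform into the other; the transverse case is then just a rewriting of the longitudinal one with $\vu$ replaced by $\vu^\perp$ in the scalar factor.

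The first half (getting $\Lc_0\vf$ from $\Lc_0^1\vf$) is by directional differentiation. I would compute $D_\vu \Lc_0^1\vf(\vx)$ by differentiating under the integral sign. The key observation is that since the integral curves of $\vu$ are straight line segments of unit speed, the identity $D_\vu \vu = 0$ from \eqref{D_u(u)=0} holds, and a chain-rule computation gives
\[
D_\vu \bigl[\vf(\vx+t\vu(\vx))\bigr] = (D_\vu \vf)(\vx+t\vu(\vx)) = \tfrac{d}{dt}\vf(\vx+t\vu(\vx)).
\]
Hence $D_\vu$ passes through the $t$-integral and produces only the $t$-derivative of $\vf$ along the ray, with no extra term from differentiating $\vu(\vx)$. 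An integration by parts in $t$, where the boundary term at $t=\infty$ vanishes by the compact support of $\vf$ and the boundary term at $t=0$ vanishes because of the factor $t$, will yield
\[
D_\vu \Lc_0^1\vf(\vx) = -\Lc_0\vf(\vx).
\]

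The second half (getting $\Lc_0^1\vf$ from $\Lc_0\vf$) is by integrating along the ray. Using again that $\vu(\vx+s\vu(\vx))=\vu(\vx)$, I will write
\[
\Lc_0\vf(\vx+s\vu(\vx)) = -\int_s^{\infty} \vu(\vx)\cdot \vf(\vx+r\vu(\vx))\, dr
\]
by the substitution $r=s+t$. Integrating this identity in $s$ from $0$ to $\infty$ and applying Fubini to swap the order of integration gives
\[
\int_0^{\infty}\Lc_0\vf(\vx+s\vu(\vx))\, ds = -\int_0^{\infty} r\, \vu(\vx)\cdot \vf(\vx+r\vu(\vx))\, dr = \Lc_0^1\vf(\vx),
\]
which expresses $\Lc_0^1\vf$ as a one-dimensional integral of $\Lc_0\vf$ along the same ray. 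These two formulas together establish the equivalence claimed in part (1). For part (2), the argument is verbatim the same: the factor $\vu^\perp(\vx)$ satisfies $D_\vu \vu^\perp = 0$ by \eqref{D_u(u)=0} and is constant along the ray in the same way, so the integration by parts and Fubini steps go through unchanged, yielding $\Tc_0\vf(\vx) = -D_\vu \Tc_0^1\vf(\vx)$ and $\Tc_0^1\vf(\vx) = \int_0^{\infty}\Tc_0\vf(\vx+s\vu(\vx))\,ds$.

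There is no serious obstacle here; the only point that requires care is verifying that the chain rule really does give $D_\vu \vf(\vx+t\vu(\vx)) = (D_\vu \vf)(\vx+t\vu(\vx))$ despite the dependence of the evaluation point on $\vx$, and this is exactly where the straight-line integral-curve hypothesis enters via $D_\vu \vu = 0$ (and $D_\vu \vu^\perp = 0$ in the transverse case).
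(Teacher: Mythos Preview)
Your proposal is correct and follows essentially the same approach as the paper: both establish $D_\vu \Lc_0^1\vf = -\Lc_0\vf$ by differentiating under the integral and using that $\vu$ is constant along its own integral curves, and both then show $\Lc_0^1\vf(\vx) = \int_0^\infty \Lc_0\vf(\vx+s\vu(\vx))\,ds$. The only cosmetic difference is that the paper obtains the second identity by integrating the first one along the ray via the fundamental theorem of calculus, whereas you compute the double integral directly with Fubini; these are equivalent one-line computations.
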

\begin{proof}
One can recover $\Lc_0^{k}\vf$ from $\Lc_0^{k+1}\vf$ using the formula  $D_{\vu}\Lc_0^{k+1}\vf = - (k+1)\Lc_0^{k}\vf.$
To obtain the statement in the opposite direction, let us integrate the above equation along $\vu$. 
\begin{align*}
      &\int_0^\infty \left(u_1(\vx+t\vu(\vx))\frac{\partial}{\partial x_1}+u_2(\vx+t\vu(\vx))\frac{\partial}{\partial x_2}\right) \Lc_0^{k+1}\vf(\vx+t\vu(\vx))\,dt\\ 
       = &\int_0^\infty \left(u_1(\vx)\frac{\partial}{\partial x_1}+u_2(\vx)\frac{\partial}{\partial x_2}\right) \Lc_0^{k+1}\vf(\vx+t\vu(\vx))\,dt = \int_0^\infty \frac{d}{dt} \Lc_0^{k+1}\vf(\vx+t\vu(\vx))\,dt\\
       = &- (k+1)\int_0^\infty \Lc_0^{k}\vf(\vx+t\vu(\vx))\,dt
  \end{align*}
  Since  $\Lc_0^{k+1}\vf(\vx+t\vu(\vx))=0$ for large $t$, we have $\Lc_0^{k+1}\vf(\vx)= (k+1)\int_0^\infty \Lc_0^{k}\vf(\vx+t\vu(\vx))\,dt .$  So, $\Lc_0^{k+1}\vf$ does not give us any new information. Therefore, $\Lc_0^{k+1}\vf$ and $\Lc_0^{k}\vf$ are equivalent. The proof of equivalence of $\Tc_0^{k}\vf$ and $\Tc_0^{k+1}\vf$ is similar. 
\end{proof}


\section{V-line transform of scalar functions (\texorpdfstring{$\alpha \neq 0$}{})}\label{sec: a not 0 scalar}
In this section, we analyze the weighted VLT of a scalar function 
and present a method to invert this transform. 
Some particular cases of this problem have been considered before. Katsevich-Krylov \cite{Kats_Krylov-13} studied the case when $\alpha = -1$, while $\vu$, $\vv$ are either constant or radial. They reduced the task of inverting the VLT to solving a first-order PDE with non-constant coefficients, which can be accomplished by the method of characteristics. Sherson \cite{Sherson} later derived explicit inversion formulas for the VLTs in the same setup. 
We solve the problem of inverting the weighted VLT for any $\alpha$ and arbitrary $\vu, \vv$ satisfying Hypothesis \ref{Hypo}. To achieve our result, we apply techniques similar to those used by Katsevich and Krylov \cite{Kats_Krylov-13}.
\begin{thr}\label{scalar_focal_rec}
    Let $h\in C_c^1(\Db).$ Then $h$ can be recovered from $\Vc_{\alpha}h$.
\end{thr}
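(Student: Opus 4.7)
The plan is to reduce the inversion of $\Vc_\alpha$ to solving a first-order linear transport equation, and then differentiate the solution to recover $h$. Decompose the data as $g:=\Vc_\alpha h = \Xc_\vu h + \alpha\,\Xc_\vv h$, where $\Xc_\vv h(\vx):=\int_0^\infty h(\vx+t\vv(\vx))\,dt$ denotes the divergent beam transform along $\vv$. The same computation used in Theorem \ref{Div_beam_focal}, which is legitimate because the integral curves of both $\vu$ and $\vv$ are straight lines, yields $D_\vu\Xc_\vu h = -h$ and $D_\vv\Xc_\vv h = -h$. Set $R:=\Xc_\vv h$, so that $\Xc_\vu h = g-\alpha R$. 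Applying $D_\vu$ to this identity gives $-h = D_\vu g - \alpha D_\vu R$, and combining with $D_\vv R = -h$ to eliminate $h$ produces the transport equation
\[
(\alpha\vu+\vv)\cdot\nabla R \;=\; D_\vu g.
\]

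Next, I would solve this PDE by the method of characteristics. The characteristic field $\alpha\vu+\vv$ is nowhere zero, since the linear independence of $\vu(\vx)$ and $\vv(\vx)$ from Hypothesis \ref{Hypo} forces $|c_{uv}(\vx)|<1$ and hence
\[
|\alpha\vu+\vv|^2 \;=\; (\alpha+c_{uv})^2 + (1-c_{uv}^2) \;>\; 0.
\]
For each $\vx\in\Db$, integrate the characteristic ODE $\dot\gamma=\alpha\vu(\gamma)+\vv(\gamma)$ through $\vx$ backward to a parameter $\tau_0$ at which the forward $\vv$-ray from $\gamma(\tau_0)$ misses $\mathrm{supp}(h)$; at such a point $R(\gamma(\tau_0))=0$, and integration of the transport equation along $\gamma$ yields
\[
R(\vx) \;=\; \int_{\tau_0}^{0} D_\vu g(\gamma(\tau))\,d\tau.
\]
Having reconstructed $R$ throughout $\Db$, the function $h$ is finally recovered by the pointwise formula $h = -D_\vv R$.

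The most delicate step will be verifying that the backward-characteristic boundary condition is always available: one must show that every characteristic of $\alpha\vu+\vv$ passing through an interior point of $\Db$ can be traced, in finite backward parameter and without leaving the region where $\vu,\vv$ and their straight-line integral curves are defined, to a vertex whose forward $\vv$-ray avoids $\mathrm{supp}(h)$. Since $\mathrm{supp}(h)$ is compact inside $\Db$ and the characteristic field is a non-vanishing $C^1$ vector field transverse (in the generic case) to the backward $\vv$-cone of $\mathrm{supp}(h)$, this geometric property holds after possibly extending $\vu,\vv$ to a slightly enlarged neighborhood of $\overline{\Db}$ in a manner compatible with Hypothesis \ref{Hypo}. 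Once this point is handled, the displayed formula provides an explicit inversion of $\Vc_\alpha$ for every real $\alpha\neq 0$ (the case $\alpha=0$ being Theorem \ref{Div_beam_focal}), and the limits $\alpha\to 0$ and $\alpha\to 1$ recover the previously established reconstructions discussed in Sections \ref{sec: a = 0} and \ref{sec: a = 1}.
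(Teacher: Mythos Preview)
Your approach is correct but follows a different route from the paper's. The paper differentiates $\Vc_\alpha h$ first along $\vu$ and then along $\vv$, and after a somewhat involved calculation using the commutator identity \eqref{eq:D_uD^perp_u and D_vD^perp_v} and the relations \eqref{eq:D_v(c_uv)} to eliminate the auxiliary term $I_v = D^\perp_\vv\Xc_\vv h$, arrives at a first-order transport equation \emph{for $h$ itself} (equation \eqref{scalar PDE}), which has the same characteristic field $\alpha\vu+\vv$ as yours but also carries a zeroth-order coefficient built from $\delta(\vu)$, $\delta(\vv)$ and $c_{uv}$. Your derivation is considerably shorter: introducing $R=\Xc_\vv h$ yields the homogeneous equation $(\alpha\vu+\vv)\cdot\nabla R = D_\vu g$ in two lines, and then $h=-D_\vv R$. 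The trade-off is in the boundary data. Since $h$ is compactly supported in $\Db$, the paper's unknown comes with the trivial initial condition $h=0$ outside $\operatorname{supp}(h)$, whereas $R$ need not vanish near $\partial\Db$, and your argument requires every characteristic to reach the set $\{R=0\}$ while remaining in the domain where $\vu,\vv$ are defined --- exactly the point you flag as delicate. Both proofs are equally informal about actually integrating the characteristic ODE (neither excludes closed orbits of $\alpha\vu+\vv$), so your residual gap is no larger than the paper's; you have simply traded a cleaner PDE for a less obvious initial condition.
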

\begin{proof}
Let us differentiate $\Vc_{\alpha}h(\vx)$ in the direction of $\vu$ and use the identities \eqref{decomp: D_u and D_v}, \eqref{D_u(u)=0} to get
    \begin{align}\label{D_uV_wh}
        D_\vu  \Vc_{\alpha}h(\vx)&=-h(\vx)+\alpha\left[c_{uv}(\vx)D_\vv+c^\perp_{uv}(\vx)D^\perp_\vv\right]\int_0^{\infty} h(\vx+t\vv(\vx))\, dt\\
        &=-\left[1+\alpha c_{uv}(\vx)\right]h(\vx) +\alpha c^\perp_{uv}(\vx)I_\vv(\vx), \quad \mbox{where } I_v(\vx)=D^\perp_\vv\int_0^{\infty} h(\vx+t\vv(\vx))\, dt.\nonumber
    \end{align}
Next, we apply $\displaystyle D_\vv$ to above relation and use identity \eqref{eq:D_v(c_uv)} to obtain
    \begin{align*}
        D_\vv D_\vu  \Vc_{\alpha}h(\vx) &= -\alpha D_\vv(c_{uv}(\vx))h(\vx)-\left[1 +\alpha c_{uv}(\vx)\right]D_\vv h(\vx)+\alpha D_\vv(c^\perp_{uv}(\vx))I_v(\vx)+\alpha c^\perp_{uv}(\vx)D_\vv I_v(\vx)\\
        &= -\alpha \delta \vu  (\vx)(c^\perp_{vu}(\vx))^2h(\vx)-\left[1 +\alpha c_{uv}(\vx)\right]D_\vv h(\vx)+\alpha \delta \vu  (\vx)c^\perp_{vu}(\vx)c_{uv}(\vx)I_v(\vx)\nonumber\\& \quad +\alpha c^\perp_{uv}(\vx)D_\vv I_v(\vx).
    \end{align*}
    Using identity \eqref{eq:D_uD^perp_u and D_vD^perp_v}, we find that $D_{\vv}I_v(\vx)= -D^\perp_{\vv}h(\vx)-\delta \vv(\vx) I_v(\vx)$. Substituting this into the above equation and using relations \eqref{decomp: D_u and D_v} and \eqref{D_uV_wh}, we obtain
     \begin{align}\label{scalar PDE}
         &(\alpha D_\vu + D_\vv)h(\vx) + \left[\alpha \delta \vu  (\vx)+ \delta \vv(\vx) + c_{uv}(\vx)\delta \vu  (\vx)+\alpha c_{uv}(\vx)\delta \vv(\vx)\right]h(\vx)\nonumber\\
         &=-[c_{uv}(\vx)\delta \vu  (\vx)+\delta \vv(\vx)]D_\vu\Vc_{\alpha}h(\vx) - D_\vv D_\vu \Vc_{\alpha}h(\vx).
     \end{align}
     This is a transport equation with respect to the unknown function $h$, which can be solved by the method of characteristics.
\end{proof}
\begin{rem}
The results of Theorem \ref{scalar_focal_rec} match with the statements in several previous works addressing particular cases of our setup. Namely,
\begin{itemize}
   \item If the vector fields $\vu$, $\vv$ are radial and $\alpha=-1$, our inversion of 
   $\Vc_{\alpha}$ 
  coincides with the inversion of the signed V-line transform derived in \cite{Kats_Krylov-13}. 
    \item  If the vector fields $\vu$, $\vv$ are constant (thus, $\delta \vu  $ and $\delta \vv$ are identically zero) and the weight $\alpha$ is arbitrary,
    our result reduces to the inversion formula obtained in \cite{amb-lat_2019}.
\end{itemize}
\end{rem}

\section{Longitudinal and transverse V-line transforms with their first moments (\texorpdfstring{$\alpha = 1$}{a1})}\label{sec: a = 1}
In this section we focus on the reconstruction of vector fields from their longitudinal and transverse VLTs, as well as from one of those transforms and its first moment. The reconstruction of $\vf$ using its longitudinal and transverse VLTs is discussed in Subsection \ref{subsec:L1f and T1f}, while the recovery from the longitudinal/transverse VLT and its first moment is addressed in Subsection \ref{subsec:L11f and T11f}. 

\subsection{Reconstruction of a vector field \texorpdfstring{$\vf$}{f} from \texorpdfstring{$\Lc_{1}\vf$}{Lf} and \texorpdfstring{$\Tc_{1}\vf$}{Tf}}\label{subsec:L1f and T1f}
First, we state the main results along with some corollaries and then present their proofs.

\begin{thr}[Kernel description]\label{th:kernel of L and T V-line}
    Let $\vf\in C^2_c(S^1;\Db)$. Then, we have 
    \begin{enumerate}
        \item[(i)] $\Lc_1\vf=0$ if and only if $\vf$ is conservative, i.e. $\displaystyle \vf = \nabla \varphi$, for some $\varphi \in C_c^3(\Db)$.
        \item[(ii)] $\Tc_1\vf=0$ if and only if $\vf$ is solenoidal, i.e.  $\displaystyle \vf = \nabla^\perp \varphi$, for some $\varphi \in C_c^3(\Db)$.
    \end{enumerate}
\end{thr}
\vspace{2mm}
\begin{thr}[Reconstruction formulas]\label{th:Inversion of L and T V-line}
    Let $\vf\in C^2_c(S^1;\Db)$. The following statements hold for all $\vx \in \Db$.
    \begin{enumerate}
        \item[(i)]  $\delta^\perp \vf$ can be explicitly recovered from $\Lc_1\vf$ as follows:
        \begin{align}\label{Lf:sol_rec}
            \delta^\perp \vf\ (\vx) = \frac{1}{\det (\vv,\vu)}\left[D_\vv D_\vu + \left\{\delta \vu(\vx)\,  c_{uv}(\vx) + \delta \vv(\vx)\right\}D_\vu\right]\Lc_1\vf (\vx).
        \end{align}
        \item[(ii)] $\delta \vf$ can be explicitly recovered from $\Tc_1\vf$ as follows:
        \begin{align}\label{Tf:pot_rec}
            \delta \vf\ (\vx) = -\frac{1}{\det (\vv,\vu)}\left[D_\vv D_\vu + \left\{\delta \vu(\vx)\, c_{uv}(\vx) + \delta \vv(\vx)\right\}D_\vu\right]\Tc_1\vf (\vx).
        \end{align}
    \end{enumerate}
\end{thr}


Note, the denominator $\displaystyle \det(\vv,\vu)$
is non-zero since $\vu$ and $\vv$  are linearly independent for all $\vx$.
\begin{cor}\label{rem:componentwise}
One can express the Laplacian of the components of $\vf$ in terms of $\delta^\perp \vf$ and $\delta \vf$ as follows: 
    \begin{align}
        \Delta f_1 = \frac{\partial}{\partial x_1} \delta \vf -  \frac{\partial}{\partial x_2} \delta^\perp \vf  \quad  \mbox{ and } \quad \Delta  f_2 = \frac{\partial}{\partial x_2} \delta \vf +  \frac{\partial}{\partial x_1} \delta^\perp \vf.
    \end{align}
    Hence, by Theorem \ref{th:Inversion of L and T V-line}, $\Delta f_1$ and $\Delta f_2$ can be written in terms of $\Lc_1 \vf$ and $\Tc_1 \vf$. 
    Therefore,  
    $\vf$ can be recovered from $\Lc_1 \vf$ and $\Tc_1 \vf$ by solving two Poisson equations with homogeneous boundary conditions. 
\end{cor}

\noindent Corollary \ref{rem:componentwise} provides an explicit componentwise reconstruction of the vector field $\vf = (f_1, f_2)$ from $\Lc_1 \vf$ and $\Tc_1 \vf$. Alternatively, one can recover the vector field  $\vf$  from the knowledge of $\Lc_1 \vf$ and $\Tc_1 \vf$ via decomposition \eqref{eq:decomposition}, as discussed below.
\begin{cor}
 Recall, from equation \eqref{eq:decomposition}, that any vector field $\vf$ can be uniquely decomposed as $$\vf= \nabla\varphi+ \nabla^{\perp}\psi, \quad \varphi|_{\partial \Db}=0, \quad \psi|_{\partial \Db}=0.$$
 Then the scalar functions $\psi$ and $\varphi$ can be recovered simultaneously from $\Lc_1 \vf$ and $\Tc_1 \vf$ respectively. 
\end{cor}
\begin{proof}
    By applying $\delta^\perp$ and $\delta$ to the decomposition of $\vf$,  we get, respectively,
    \begin{align*}
       \Delta \psi=  \delta^\perp\vf \quad \mbox{and} \quad  \Delta \varphi=  \delta\vf.
    \end{align*}
   Since $\delta^\perp\vf$ and $\delta \vf$ can be recovered from $\Lc_1\vf$ and  $\Tc_1\vf$ using equations \eqref{Lf:sol_rec}, \eqref{Tf:pot_rec}, one can recover $\varphi$ and $\psi$ by solving the corresponding Poisson equations with homogeneous boundary conditions.
\end{proof}

\begin{rem}
In the case when the vector fields $\vu$ and $\vv$ are constant (independent of position $\vx$), Theorems \ref{th:kernel of L and T V-line} and \ref{th:Inversion of L and T V-line} reduce to already established results \cite[Theorem 1, 2, 3, and 4]{Gaik_Mohammad_Rohit}. This can be seen by observing that $\delta \vu$ and $\delta \vv$ are zero when $\vu$ and $\vv$ are constant.
\end{rem}

\subsubsection{Proof of Theorem \ref{th:kernel of L and T V-line}}
\begin{proof}[\textbf{Proof of part (i)}]
    If $\vf= \nabla\varphi$, then by a direct calculation, we get $\Lc_1\vf=0.$ 
    The statement in the other direction follows from the reconstruction formula \eqref{Lf:sol_rec}. If $\Lc_1\vf=0,$ then by \eqref{Lf:sol_rec} we have $\delta^\perp \vf=0.$  For a simply connected domain, it is known that  $\delta^\perp \vf=0$ if and only if $\vf= \nabla\varphi$ for some scalar function $\varphi.$ This completes the proof.
\end{proof}
\begin{proof}[\textbf{Proof of part (ii)}]
    If $\vf= \nabla^\perp\psi$, then by a direct calculation, we get $\Tc_1\vf=0.$ 
    The statement in the other direction follows from the reconstruction formula \eqref{Tf:pot_rec}. If $\Tc_1\vf=0,$ then by \eqref{Tf:pot_rec} we have $\delta \vf=0.$ It is known that for a two-dimensional solenoidal vector field $\vf$ in a simply connected domain, there exists a scalar function $\psi$ such that $\vf= \nabla^\perp \psi$. This completes the proof.
\end{proof}


\subsubsection{Proof of Theorem \ref{th:Inversion of L and T V-line}}
\begin{proof}[\textbf{Proof of part (i)}]
Differentiating $\Lc_1\vf$ in the direction of $\vu$ and using relations \eqref{decomp: D_u and D_v}, \eqref{D_u(u)=0}, we get
\begin{align}\label{eq:D_uLf}
    D_\vu \Lc_1\vf = \vu \cdot \vf -c_{uv} \vv \cdot \vf + c^\perp_{uv} J_v, 
\end{align}
where  \begin{align}\label{eq:Jv}
    J_v(\vx)= D^\perp_\vv \int_0^\infty \vv(\vx) \cdot \vf(\vx+t\vv(\vx))\, dt.
    \end{align}
Next, applying the directional derivative along $\vv,$ we obtain
\begin{align}
   D_\vv D_\vu \Lc_1\vf = D_\vv(\vu \cdot \vf -c_{uv} \vv \cdot \vf) + D_\vv(c^\perp_{uv}) J_v + c^\perp_{uv} D_\vv J_v.
\end{align}
Using the relations  \eqref{eq:D_uD^perp_u and D_vD^perp_v} and \eqref{eq:Jv}, we get
\begin{align}
    D_\vv J_v(\vx)&=( D^\perp_\vv D_\vv - \delta \vv  D^\perp_\vv) \int_0^\infty \vv(\vx) \cdot \vf(\vx+t\vv(\vx))\, dt \nonumber \\
    \Longrightarrow \qquad  D_\vv J_v &= - D^\perp_\vv(\vv \cdot \vf)- \delta \vv J_v.\label{eq:DvJv}  
\end{align}
Using \eqref{eq:D_v(c_uv)} and \eqref{eq:DvJv}, we have
\begin{align*}
    D_\vv D_\vu \Lc_1\vf = D_\vv(\vu \cdot \vf - c_{uv} \vv \cdot \vf) -\delta \vu c^\perp_{uv}c_{uv} J_v - c^\perp_{uv} D^\perp_\vv(\vv \cdot \vf)- c^\perp_{uv}\delta \vv J_v .
\end{align*}
Then using \eqref{eq:D_uLf} and \eqref{decomp: D_u and D_v}, we get
\begin{align*}
    D_\vv D_\vu \Lc_1\vf &= D_\vv(\vu \cdot \vf - c_{uv} \vv \cdot \vf)-(D_\vu- c_{uv}D_\vv)(\vv \cdot \vf) - (\delta \vu  c_{uv}+\delta \vv)\left(D_{\vu} \Lc_1\vf-\vu \cdot \vf+c_{uv}\vv \cdot \vf\right)\\
    &=D_\vv(\vu\cdot \vf)-D_\vv(c_{uv})(\vv \cdot \vf)-D_\vu(\vv \cdot \vf)-(\delta \vu  c_{uv}+\delta \vv)(D_\vu \Lc_1\vf-\vu \cdot \vf+c_{uv}\vv \cdot \vf).
\end{align*}
Simplifying the expression using the aforementioned relations, we obtain
\begin{align*}
       & D_\vv D_\vu \Lc_1\vf + (\delta \vu  c_{uv}+\delta \vv)D_\vu \Lc_1\vf\\
     &= D_\vv(\vu)\cdot \vf + \vu \cdot D_\vv \vf - D_\vu(\vv)\cdot \vf - \vv \cdot  D_\vu \vf - \delta \vu  (c^\perp_{vu})^2 (\vv\cdot \vf) - (\delta \vu  c_{uv}+\delta\vv) (-\vu \cdot \vf+c_{uv}\vv \cdot \vf)\\ 
     &= c^\perp_{vu}\delta \vu  (\vu^\perp\cdot \vf)+ \vu \cdot D_\vv \vf - c^\perp_{uv}\delta \vv(\vv^\perp \cdot \vf)-\vv \cdot D_\vu \vf-  \delta \vu  (c^\perp_{vu})^2 (\vv\cdot \vf) + \delta \vu c_{uv}(\vu \cdot \vf)\\
     & \quad + \delta \vv(\vu \cdot \vf)- \delta \vu  c_{uv}^2(\vv \cdot \vf)- \delta \vv c_{uv}(\vv \cdot \vf), \quad \mbox{using relations \eqref {eq:D_u(v)} and \eqref{eq:D_v(u)}}
     \end{align*}
     \begin{align*}
     &= \vu \cdot D_\vv \vf - \vv \cdot D_\vu \vf + \delta \vu  [c^\perp_{vu}(\vu^\perp \cdot \vf)+ c_{uv}(\vu \cdot \vf)-\vv \cdot \vf] - \delta \vv[c^\perp_{uv}(\vv^\perp \cdot \vf)+c_{uv}(\vv \cdot \vf)-\vu \cdot \vf]\\
     &= \vu \cdot D_\vv \vf - \vv \cdot D_\vu \vf + \delta \vu  [(c^\perp_{vu}\vu^\perp + c_{uv}\vu) \cdot \vf-\vv \cdot \vf] - \delta \vv[(c^\perp_{uv}\vv^\perp +c_{uv} \vv ) \cdot \vf -\vu \cdot \vf ]\\
     &= \vu \cdot D_\vv \vf- \vv \cdot D_\vu \vf, \quad (\mbox{Since} ~ \vu= c_{uv}\vv + c_{uv}^{\perp}\vv^\perp ~ \mbox{and} ~ \vv= c_{vu}\vu + c_{vu}^{\perp}\vu^\perp)\\
     &= u_1v_1 \frac{\partial f_1}{\partial x_1}+u_1v_2\frac{\partial f_1}{\partial x_2}+u_2v_1\frac{\partial f_2}{\partial x_1} + u_2v_2 \frac{\partial f_2}{\partial x_2} -v_1u_1\frac{\partial f_1}{\partial x_1}- v_1u_2\frac{\partial f_1}{\partial x_2}-v_2u_1 \frac{\partial f_2}{\partial x_1}-v_2u_2\frac{\partial f_2}{\partial x_2}\\
     &= \det(\vv,\vu)\left(\frac{\partial f_2}{\partial x_1}-\frac{\partial f_1}{\partial x_2}\right).
\end{align*}
Hence, we have 
\begin{align}
    \delta^\perp \vf =  \frac{1}{\det(\vv,\vu)}[D_\vv D_\vu +(\delta \vu  c_{uv}+\delta \vv)D_\vu]\Lc_1\vf.
\end{align}
This completes the proof of part $(i)$ of Theorem \ref{th:Inversion of L and T V-line}.
\end{proof} 
\begin{proof}[\textbf{Proof of part (ii)}]
Let us apply the directional derivative $D_{\vu}$ to $\Tc_1\vf.$ Using again the identities \eqref{decomp: D_u and D_v} and \eqref{D_u(u)=0}, we get
\begin{align}\label{eq:D_uTf}
    D_\vu \Tc_1\vf = \vu^\perp \cdot \vf -c_{uv} \vv^\perp\cdot \vf 
    +c^\perp_{uv} J^\perp_v,
\end{align}
where  \begin{align}\label{eq:J^perp_v}
    J^\perp_v(\vx)= D^\perp_\vv \int_0^\infty \vv^\perp(\vx) \cdot \vf(\vx+t\vv(\vx))\, dt.
    \end{align}
Next applying directional derivative along $\vv,$ we obtain
\begin{align*}
   D_\vv D_\vu \Tc_1\vf = D_\vv(\vu^\perp \cdot \vf - c_{uv} \vv^\perp \cdot \vf) + D_\vv(c^\perp_{uv}) J^\perp_v + c^\perp_{uv} D_\vv J^\perp_v.
\end{align*}
Then using the identities \eqref{eq:D_uD^perp_u and D_vD^perp_v} and \eqref{eq:D_v(c_uv)}, we get
\begin{align*}
   D_\vv D_\vu \Tc_1\vf = D_\vv(\vu^\perp \cdot \vf) - D_\vv(c_{uv}) (\vv^\perp \cdot \vf) - c_{uv}D_\vv(\vv^\perp \cdot \vf) - \delta \vu c^\perp_{uv}c_{uv}J^\perp_v - c^\perp_{uv}D^\perp_\vv(\vv^\perp \cdot \vf) - c^\perp_{uv}\delta \vv J^\perp_v.
\end{align*}
Using the relations \eqref{decomp: D_u and D_v}, \eqref{eq:D_v(c_uv)} and \eqref{eq:D_uTf}, we obtain
\begin{align*}
   & D_\vv D_\vu \Tc_1\vf + (\delta\vu c_{uv}+\delta \vv)D_\vu \Tc_1\vf\\
    &= D_\vv(\vu^\perp \cdot \vf)- D_\vu(\vv^\perp \cdot \vf) - \delta \vu  (c^\perp_{vu})^2(\vv^\perp \cdot \vf) -(\delta \vu  c_{uv}+\delta \vv)(-\vu^\perp \cdot \vf + c_{uv}\vv^\perp \cdot \vf)\\
    &= D_\vv(\vu^\perp)\cdot \vf + \vu^\perp \cdot D_\vv \vf -D_\vu(\vv^\perp)\cdot \vf - \vv^\perp \cdot D_\vu \vf +(\delta \vu  c_{uv}+\delta \vv)(\vu^\perp \cdot \vf)-(\delta \vu  + \delta \vv c_{uv})(\vv^\perp \cdot \vf) \\
    &= \vu^\perp \cdot D_\vv \vf - \vv^\perp \cdot D_\vu \vf +\delta \vu  [c^\perp_{uv}\vu \cdot \vf + c_{uv}\vu^\perp \cdot \vf-\vv^\perp \cdot \vf]- \delta \vv[-c^\perp_{uv}\vv \cdot \vf + c_{uv}\vv^\perp \cdot \vf - \vu^\perp \cdot \vf]\\
    &= \vu^\perp \cdot D_\vv \vf- \vv^\perp \cdot D_\vu \vf \\
    &= - \det(\vv,\vu)\left( \frac{\partial f_1}{\partial x_1}+ \frac{\partial f_2}{\partial x_2}\right).
\end{align*}
Thus we have,
\begin{align}
    \delta \vf = -\frac{1}{\det(\vv,\vu)}[D_\vv D_\vu + (\delta \vu  c_{uv}+\delta \vv)D_\vu]\Tc_1\vf.
\end{align}
This completes the proof of part $(ii)$ of Theorem \ref{th:Inversion of L and T V-line}.
\end{proof} 
\subsection{Recovery of a vector field \texorpdfstring{$\vf$}{f} from \texorpdfstring{$\Lc_{1}\vf$}{Lf}/\texorpdfstring{$\Tc_{1}\vf$}{Lf} and \texorpdfstring{$\Lc^1_{1}\vf$}{Lf}/\texorpdfstring{$\Tc^1_{1}\vf$}{Tf}}\label{subsec:L11f and T11f}
In this subsection, we show that a vector field $\vf$ can be recovered in two different ways: from the combination of the longitudinal VLT $\Lc_{1}\vf$ and its first moment $\Lc^1_{1}\vf$, as well as from the knowledge of the transverse VLT $\Tc_{1}\vf$ and its first moment $\Tc^1_{1}\vf$. 
\begin{thr}\label{th:Inversion of L and L1 V-line}
    Any vector field $\vf \in C_c^2(S^1; \Db)$ 
    can be explicitly recovered from $\Lc_1\vf$ and $\Lc_1^1\vf.$
\end{thr}
\begin{proof}
Recall from equation \eqref{eq:decomposition} that a vector field $\vf$ can be decomposed as follows:
$$    \vf= \nabla\varphi+ \nabla^{\perp}\psi, \quad \varphi|_{\partial \Db}=0, \quad \psi|_{\partial \Db}=0. $$
We showed in the previous subsection that the scalar function $\psi$ is completely determined by the knowledge of $\Lc_1\vf$. To complete the proof of the theorem, we need to show that $\varphi$ can be recovered from the knowledge of reconstructed $\psi$ and $\Lc_1^1\vf$.
\vspace{2mm}\\
Applying $\Lc^1$ to the decomposition mentioned above, we get
    \begin{align*}
        \Lc_1^1\vf &= \Lc_1^1(\nabla\varphi)+\Lc_1^1(\nabla^\perp \psi)\\
        \Longrightarrow\qquad \qquad  \Lc_1^1\vf-\Lc^1_1(\nabla^\perp \psi)&= \Lc_1^1(\nabla\varphi)\\
        &= -\int_0^\infty t \vu(\vx)\cdot \nabla\varphi\left(\vx+ t \vu(\vx)\right)dt+\int_0^\infty t \vv(\vx)\cdot \nabla\varphi(\vx+ t \vv(\vx))dt\\
        &= -\int_0^\infty t \frac{d}{dt}\varphi(\vx+ t \vu(\vx))dt+\int_0^\infty t \frac{d}{dt}\varphi(\vx+ t \vv(\vx))dt\\
        & =  \int_0^\infty \varphi(\vx+ t \vu(\vx))dt-\int_0^\infty \varphi(\vx+ t \vv(\vx))dt
        =\Vc_{-1} \varphi (\vx).
    \end{align*}
    Using the inversion of $\Vc_{-1}$ discussed in Theorem  \ref{scalar_focal_rec}, we recover $\varphi$, which completes the proof.
\end{proof}

\begin{thr}\label{th:Inversion of T and T1 V-line}
    Let $\vf \in C_c^2(S^1; \Db).$ Then $\vf$ can be recovered explicitly from $\Tc_1\vf$ and $\Tc_1^1\vf.$
\end{thr}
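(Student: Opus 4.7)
The plan is to mirror the strategy used in the proof of Theorem \ref{th:Inversion of L and L1 V-line}, but with the roles of $\varphi$ and $\psi$ swapped. I would begin by invoking the Helmholtz-type decomposition \eqref{eq:decomposition} to write $\vf = \nabla \varphi + \nabla^\perp \psi$ with $\varphi|_{\partial \Db} = \psi|_{\partial \Db} = 0$. By linearity and the kernel description in Theorem \ref{th:kernel of L and T V-line}(ii), $\Tc_1(\nabla^\perp \psi) = 0$, so $\Tc_1 \vf = \Tc_1(\nabla \varphi)$. Applying part (ii) of Corollary \ref{coro: potential recovery} then recovers $\varphi$ explicitly by solving a Poisson problem with zero boundary data.

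With $\varphi$ in hand, the quantity $\Tc_1^1(\nabla \varphi)$ can be computed directly from the data and subtracted from $\Tc_1^1 \vf$, yielding $\Tc_1^1(\nabla^\perp \psi)$. The key computational step is to identify this leftover transform as a weighted V-line transform of the scalar $\psi$. Using the elementary identity $\vu^\perp \cdot \nabla^\perp \psi = \vu \cdot \nabla \psi = \frac{d}{dt}\psi(\vx + t\vu(\vx))$ (and likewise for $\vv$), which holds precisely because the integral curves of $\vu$ and $\vv$ are straight line segments so $\vu(\vx+t\vu(\vx)) = \vu(\vx)$, the integrand reduces to a total derivative in $t$. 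A single integration by parts, exploiting the compact support of $\psi$, then yields
\begin{align*}
\Tc_1^1(\nabla^\perp \psi)(\vx)
&= -\int_0^\infty t\,\tfrac{d}{dt}\psi(\vx+t\vu(\vx))\,dt + \int_0^\infty t\,\tfrac{d}{dt}\psi(\vx+t\vv(\vx))\,dt \\
&= \int_0^\infty \psi(\vx+t\vu(\vx))\,dt - \int_0^\infty \psi(\vx+t\vv(\vx))\,dt \;=\; \Vc_{-1}\psi(\vx).
\end{align*}

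From here, the scalar inversion Theorem \ref{scalar_focal_rec} (valid for any $\alpha \neq 0,1$, in particular $\alpha = -1$) recovers $\psi$ explicitly, and assembling $\vf = \nabla \varphi + \nabla^\perp \psi$ completes the reconstruction. The main obstacle is really the first step, because Corollary \ref{coro: potential recovery}(ii) as stated assumes $\vf = \nabla \varphi$; I would emphasize that this assumption is not restrictive in our usage, since the $\nabla^\perp \psi$ component is automatically annihilated by $\Tc_1$ and so contributes nothing to the boundary value problem \eqref{laplace_phi}. Everything else is bookkeeping analogous to the longitudinal case, with the sign conventions chosen so that the perpendicular pairing $\vu^\perp \cdot \nabla^\perp = \vu \cdot \nabla$ restores the total-derivative structure needed to pass from $\Tc_1^1$ on a gradient-orthogonal field to $\Vc_{-1}$ on its potential.
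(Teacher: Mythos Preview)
Your proposal is correct and follows essentially the same argument as the paper: decompose $\vf = \nabla\varphi + \nabla^\perp\psi$, recover $\varphi$ from $\Tc_1\vf$ via Corollary~\ref{coro: potential recovery}(ii), then reduce $\Tc_1^1\vf - \Tc_1^1(\nabla\varphi) = \Tc_1^1(\nabla^\perp\psi)$ to $\Vc_{-1}\psi$ by the same integration-by-parts computation, and invert using Theorem~\ref{scalar_focal_rec}. One trivial note: Theorem~\ref{scalar_focal_rec} is in fact stated for all $\alpha\neq 0$, not just $\alpha\neq 0,1$, but this does not affect your use of it at $\alpha=-1$.
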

\begin{proof}
We again start with the decomposition
$$    \vf= \nabla\varphi+ \nabla^{\perp}\psi, \quad \varphi|_{\partial \Db}=0, \quad \psi|_{\partial \Db}=0. $$
In this case, $\varphi$ is known from the knowledge of $\Tc_1\vf$, and we aim to recover $\psi$ using the additional information $\Tc_1^1\vf$.
Consider, 
\begin{align*}
\Tc_1^1\vf &= \Tc_1^1(\nabla\varphi)+\Tc_1^1(\nabla^\perp \psi)\\
        \Longrightarrow \quad \qquad  \Tc_1^1\vf-\Tc_1^1(\nabla\varphi) &= \Tc_1^1(\nabla^\perp \psi)\\
        &= -\int_0^\infty t \vu^\perp(\vx)\cdot \nabla^\perp\psi(\vx+ t \vu(\vx))dt+\int_0^\infty t \vv^\perp(\vx)\cdot \nabla^\perp\psi(\vx+ t \vv(\vx))dt\\
        &= -\int_0^\infty t \frac{d}{dt}\psi(\vx+ t \vu(\vx))dt+\int_0^\infty t \frac{d}{dt}\psi(\vx+ t \vv(\vx))dt\\
        & =  \int_0^\infty \psi(\vx+ t \vu(\vx))dt-\int_0^\infty \psi(\vx+ t \vv(\vx))dt
        =\Vc_{-1} \psi (\vx).
    \end{align*}
Using the inversion of $\Vc_{-1}$ discussed in Theorem  \ref{scalar_focal_rec}, we recover $\psi$, which completes the proof.
\end{proof}
\begin{rem}
The possibility of reconstructing a vector field $\vf$ from $\Lc_1\vf$ and $\Lc_1^1\vf$ (or $\Tc_1\vf$ and $\Tc_1^1\vf$) was shown before in \cite{Gaik_Mohammad_Rohit}, for the case when the branch directions $\vu$ and $\vv$ are constant. 
The method developed there recovered $f_1$ and $f_2$, while here we recover the parts of $\vf$ that come from decomposition \eqref{eq:decomposition}. Furthermore, the reconstruction method in \cite{Gaik_Mohammad_Rohit} was explicit, whereas here we need to solve certain partial differential equations. This disparity is due to the fact that in this paper we deal with a more general setup.
  
\end{rem}

\section{Longitudinal/transverse V-line transforms, when \texorpdfstring{$\alpha \ne0,1$}{} and \texorpdfstring{$\vu,\vv$}{u,v} are constant vector fields}\label{sec: a not 0 vector}
Throughout this section, we assume $\vu$ and $\vv$ are constant vector fields, and $\alpha \ne0,1$ is arbitrary. Without loss of generality, we can take V-lines symmetric about the $y$-axis, i.e. $\vu=(u_1,u_2)$ and $\vv=(-u_1,u_2)$. 

The idea here is to introduce new variables (depending on the weight $\alpha$) so that in the new coordinates, all the (weighted) integral transforms reduce to the unweighted case ($\alpha =1$). 
For $\alpha \ne0$, let us introduce the change of coordinates from $\vx=(x_1,x_2)$ to $\widetilde{\vx}=(\widetilde{x}_1, \widetilde{x}_2)$ as follows:
\begin{align}\label{change from x to tilde(x)}
    \widetilde{x}_1 = -\frac{1}{4\alpha}\left( \frac{1+\alpha}{u_1u_2}\,x_1+ \frac{1-\alpha}{u_2^2}\,x_2\right) \quad \mbox{and} \quad
    \widetilde{x}_2 = -\frac{1}{4\alpha}\left( \frac{1-\alpha}{u_1^2}\,x_1 +  \frac{1+\alpha}{u_1u_2}\, x_2\right).
\end{align}
The change of coordinates from $\tilde{\vx}=(\widetilde{x}_1,\widetilde{x}_2)$ to $\vx=(x_1,x_2)$ is given by 
\begin{align}\label{change from tilde(x) to x}
x_1 = -(1+\alpha)u_1u_2\, \widetilde{x}_1 +(1-\alpha)u_1^2\, \widetilde{x}_2 \quad \mbox{and} \quad
x_2 = (1-\alpha) u_2^2\, \widetilde{x}_1 -(1+\alpha)u_1u_2\, \widetilde{x}_2.  
\end{align}
Then, one may obtain by direct computation
\begin{align}\label{eq:derivatives in new coordinates}
   \partial_{ \widetilde{x}_1} = -(1+\alpha)u_1u_2\,\partial_{x_1}+(1-\alpha)u_2^2\,\partial_{x_2} \quad \mbox{ and }\quad \partial_{ \widetilde{x}_2} = (1-\alpha)u_1^2\,\partial_{x_1} - (1+\alpha)u_1u_2\,\partial_{x_2}.
\end{align}
The differential operators, such as gradient, divergence, and curl, are defined naturally in new coordinates as follows: 
\begin{align}\label{eq:div and curl in new coordinates}
\widetilde{\nabla} h = \left(\partial_{ \widetilde{x}_1} h,  \partial_{\widetilde{x}_2} h\right), \ \  \widetilde{\nabla}^\perp h = \left(-\partial_{\widetilde{x}_2} h, \partial_{\widetilde{x}_1} h\right), \ \    \widetilde{\delta} \vf =  \partial_{ \widetilde{x}_1}f_1 + \partial_{ \widetilde{x}_2}f_2,\ \  
\widetilde{\delta}^\perp \vf =  \partial_{ \widetilde{x}_1}f_2 - \partial_{ \widetilde{x}_2}f_1.
\end{align}

\subsection{Full recovery of a vector field \texorpdfstring{$\vf$}{} from \texorpdfstring{$\Lc_{\alpha}\vf$}{} and \texorpdfstring{$\Tc_{\alpha}\vf$}{}}
This subsection is dedicated to finding the kernels of $\Lc_{\alpha}\vf$ and $\Tc_{\alpha}\vf$, as well as recovering $\vf$ from these transforms. We show that each transform has a non-trivial null space and reconstruct the unknown vector field $\vf$ using the knowledge of both transforms $\Lc_{\alpha}\vf$ and $\Tc_{\alpha}\vf$.
\begin{lem} \label{lemma2}
     Let $\vf \in C_c^2(S^1;\Db).$ Then we have 
     \begin{align}
         D_{\vu}D_{\vv}\Lc_{\alpha}\vf &= \partial_{ \widetilde{x}_1}f_2 - \partial_{ \widetilde{x}_2}f_1 = \widetilde{\delta}^\perp \vf\label{DuDvL_alpf}. \\
        D_{\vu}D_{\vv}\Tc_{\alpha}\vf &= -(\partial_{ \widetilde{x}_1}f_1 + \partial_{ \widetilde{x}_2}f_2) = \widetilde{\delta} \vf \label{DuDvT_alpf}.
     \end{align}
\end{lem}
\begin{proof}
    Taking the directional derivatives of $\Lc_{\alpha}\vf$ in the directions of $\vu$ and $\vv$, we have
    \begin{align*}
        D_{\vu}D_{\vv} \Lc_{\alpha}\vf &= D_{\vv}(\vu\cdot \vf) -\alpha D_{\vu}(\vv\cdot \vf)\\
        &= (-u_1\partial_{x_1}+u_2\partial_{x_2})(u_1f_1+u_2f_2) -\alpha (u_1\partial_{x_1}+u_2\partial_{x_2})(-u_1f_1+u_2f_2)\\
        &= [-(1-\alpha)u_1^2\partial_{x_1}+(1+\alpha)u_1u_2\partial_{x_2}]f_1 +[-(1+\alpha)u_1u_2\partial_{x_1}+(1-\alpha)u_2^2\partial_{x_2}]f_2\\
        &=  \partial_{ \widetilde{x}_1}f_2 - \partial_{ \widetilde{x}_2}f_1 .
    \end{align*}
Similarly, taking the directional derivatives of $\Tc_{\alpha}\vf$ in the directions of $\vu$ and $\vv$, we have
\begin{align*}
     D_{\vu}D_{\vv} \Lc_{\alpha}\vf &= D_{\vv}(\vu^\perp\cdot \vf) -\alpha D_{\vu}(\vv^\perp\cdot \vf)\\
     &= (-u_1\partial_{x_1}+u_2\partial_{x_2})(-u_2f_1+u_1f_2)-\alpha(u_1\partial_{x_1}+u_2\partial_{x_2})(-u_2f_1-u_1f_2)\\
     &= [(1+\alpha)u_1u_2\partial_{x_1}-(1-\alpha)u_2^2]f_1 + [-(1-\alpha)u_1^2\partial_{x_1}+(1+\alpha)u_1u_2\partial_{x_2} ]f_2\\
     &=-(\partial_{ \widetilde{x}_1}f_1 + \partial_{ \widetilde{x}_2}f_2). \qedhere
\end{align*}   
\end{proof}

\begin{thr}[Kernel Description]\label{th:kernel of L_alpf}
    Let $\vf\in C^2_c(S^1;\Db)$. Then, we have 
    \begin{enumerate}
        \item[(i)] $\Lc_\alpha\vf=0$ if and only if $\displaystyle \vf =  \widetilde{\nabla} \varphi$ for some function $\varphi$.
        \item[(ii)] $\Tc_\alpha\vf=0$ if and only if $\displaystyle \vf =  \widetilde{\nabla}^\perp \psi$ for some function $\psi$.
    \end{enumerate}
\end{thr}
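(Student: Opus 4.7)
The plan is to use the lemma immediately preceding the theorem, which gives the compact identities $D_\vu D_\vv \Lc_\alpha \vf = \widetilde{\delta}^\perp \vf$ and $D_\vu D_\vv \Tc_\alpha \vf = \widetilde{\delta} \vf$, and combine them with the standard Hodge-type decomposition in the $\widetilde{\vx}$-coordinates on a simply connected domain. Both parts (i) and (ii) are structurally identical, so I would write out (i) in full and indicate that (ii) follows by the parallel argument.

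For the forward direction of (i), assume $\Lc_\alpha \vf = 0$ throughout $\Db$. Applying $D_\vu D_\vv$ to this identity and invoking equation \eqref{DuDvL_alpf} yields $\widetilde{\delta}^\perp \vf = \partial_{\widetilde{x}_1} f_2 - \partial_{\widetilde{x}_2} f_1 = 0$ on $\Db$. Since $\Db$ is simply connected and $\vf$ is $C^2$ with compact support, this curl-free condition in the $\widetilde{\vx}$-coordinates guarantees the existence of a scalar potential $\varphi$ with $\vf = \widetilde{\nabla} \varphi$, which is the desired conclusion. The change of coordinates \eqref{change from x to tilde(x)}--\eqref{change from tilde(x) to x} is a linear invertible map (its Jacobian is a non-zero multiple of $\alpha$, guaranteed by $\alpha \ne 0$), so simple connectedness is preserved and the usual Poincar\'e lemma applies directly.

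For the reverse direction of (i), I would substitute $\vf = \widetilde{\nabla}\varphi$ into the definition \eqref{longi_def} of $\Lc_\alpha \vf$ and show the integrand reduces to a total derivative along each branch. Using \eqref{eq:derivatives in new coordinates} and the form $\vu = (u_1,u_2)$, $\vv = (-u_1,u_2)$, a direct expansion gives
\begin{align*}
\vu \cdot \vf = -2\alpha\, u_1 u_2\, D_\vu \varphi, \qquad \vv \cdot \vf = -2\, u_1 u_2\, D_\vv \varphi.
\end{align*}
Substituting these into \eqref{longi_def} converts both branch integrals to integrals of $\tfrac{d}{dt}\varphi(\vx+t\vu)$ and $\tfrac{d}{dt}\varphi(\vx+t\vv)$ multiplied by the common constant $2\alpha u_1 u_2$; the fundamental theorem of calculus together with the compact support of $\varphi$ then forces the two contributions to cancel, giving $\Lc_\alpha \vf = 0$. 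Part (ii) proceeds in exactly the same way: starting from $\vf = \widetilde{\nabla}^\perp \psi$, one checks that $\vu^\perp \cdot \vf = -2\alpha u_1 u_2 D_\vu \psi$ and $\vv^\perp \cdot \vf = -2 u_1 u_2 D_\vv \psi$, and the same telescoping argument in \eqref{trans_def} yields $\Tc_\alpha \vf = 0$; the forward direction uses \eqref{DuDvT_alpf} and the standard fact that a divergence-free vector field on a simply connected domain is the skew-gradient of a stream function.

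The only real subtlety I would want to be careful about is confirming that the coordinate change $\vx \mapsto \widetilde{\vx}$ genuinely preserves the potential/stream-function existence statements; this is where the assumption $\alpha \ne 0$ (and the transverse branches $u_1, u_2 \ne 0$ implicit in having $\vu$ and $\vv$ linearly independent) is used, since the Jacobian of the transformation \eqref{change from x to tilde(x)} must be non-singular. The rest of the proof is a direct computation and an application of the Poincar\'e lemma, so no deep obstacle is expected.
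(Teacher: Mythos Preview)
Your proposal is correct and follows essentially the same approach as the paper: both use the preceding lemma to reduce the forward direction to the Poincar\'e lemma in the $\widetilde{\vx}$-coordinates. The only minor difference is in the reverse direction: the paper handles it by observing that $\Lc_\alpha\vf=0$ is equivalent to $D_\vu D_\vv \Lc_\alpha\vf=0$ (which relies implicitly on the decay of $\Lc_\alpha\vf$ along the characteristic directions $\vu,\vv$), whereas you verify $\Lc_\alpha(\widetilde{\nabla}\varphi)=0$ by a direct substitution---your route is slightly more explicit but otherwise equivalent.
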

\begin{proof}[Proof of part (i)]
    Observe that $\Lc_{\alpha}\vf =0$ if and only if $ D_{\vu}D_{\vv}\Lc_{\alpha}\vf =0.$ Recall from Lemma \ref{lemma2} that $D_{\vu}D_{\vv}\Lc_{\alpha}\vf = \tilde{\delta}^\perp \vf$. It is known that in a simply connected domain $\tilde{\delta}^\perp \vf=0$ if and only if $\vf=\widetilde{\nabla}\varphi$ for some scalar function $\varphi$. This completes the proof.  
\end{proof}
\begin{proof}[Proof of part (ii)]
    Again, it is straightforward to observe that $\Tc_{\alpha}\vf =0$ if and only if $ D_{\vu}D_{\vv}\Tc_{\alpha}\vf =0$. From Lemma \ref{lemma2}  we know that  $D_{\vu}D_{\vv}\Tc_{\alpha}\vf = - \tilde{\delta} \vf$. Also, in simply connected domains,  $\tilde{\delta} \vf=0$ if and only if $\vf=\widetilde{\nabla}^\perp\psi$ for some scalar function $\psi,$ which finishes the proof.
\end{proof}
\begin{thr}\label{th:f from La and Ta}
Let $\vf \in C_c^2(S^1;\Db)$. Then $\vf$ can be recovered from the knowledge of $\Lc_{\alpha}\vf$ and $\Tc_{\alpha}\vf$.
\end{thr}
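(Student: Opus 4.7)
The plan is to leverage the change of coordinates $\vx\mapsto\widetilde{\vx}$ introduced in \eqref{change from x to tilde(x)}--\eqref{change from tilde(x) to x}, which converts the weighted operators $\Lc_\alpha, \Tc_\alpha$ into the same structural form that the $\alpha=1$ theory handled in Section \ref{sec: a = 1}. By the lemma just established, applying $D_\vu D_\vv$ to the given data yields
\begin{equation*}
\widetilde{\delta}^\perp \vf = D_\vu D_\vv \Lc_\alpha \vf, \qquad \widetilde{\delta}\, \vf = D_\vu D_\vv \Tc_\alpha \vf,
\end{equation*}
so both the ``$\widetilde{}$''-divergence and ``$\widetilde{}$''-curl of $\vf$ are recoverable directly from the data through explicit directional differentiation.

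Next I would invoke the Helmholtz-type decomposition of $\vf$ in the $\widetilde{\vx}$-coordinates: since the map $\vx\mapsto\widetilde{\vx}$ is a linear, invertible change of variables, the image $\widetilde{\Db}$ of $\Db$ is still a compact, simply connected region, and $\vf\in C_c^2(S^1;\widetilde{\Db})$ when expressed in these coordinates. The analogue of Theorem \eqref{eq:decomposition} in the tilde coordinates produces unique smooth potentials $\varphi$ and $\psi$ vanishing on $\partial \widetilde{\Db}$ such that
\begin{equation*}
\vf = \widetilde{\nabla}\varphi + \widetilde{\nabla}^\perp \psi.
\end{equation*}
Applying $\widetilde{\delta}$ and $\widetilde{\delta}^\perp$ to this decomposition and using the elementary identities $\widetilde{\delta}\,\widetilde{\nabla}^\perp \psi = 0$ and $\widetilde{\delta}^\perp \widetilde{\nabla}\varphi = 0$ gives $\widetilde{\Delta}\varphi = \widetilde{\delta}\,\vf$ and $\widetilde{\Delta}\psi = \widetilde{\delta}^\perp\vf$, where $\widetilde{\Delta} = \partial_{\widetilde{x}_1}^2 + \partial_{\widetilde{x}_2}^2$.

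The remaining step is to solve the two Dirichlet problems
\begin{equation*}
\begin{cases} \widetilde{\Delta}\varphi = D_\vu D_\vv \Tc_\alpha\vf & \text{in } \widetilde{\Db}, \\ \varphi = 0 & \text{on } \partial\widetilde{\Db}, \end{cases}
\qquad
\begin{cases} \widetilde{\Delta}\psi = D_\vu D_\vv \Lc_\alpha\vf & \text{in } \widetilde{\Db}, \\ \psi = 0 & \text{on } \partial\widetilde{\Db}, \end{cases}
\end{equation*}
each of which has a unique solution by standard elliptic theory. Substituting $\varphi, \psi$ back into the decomposition recovers $\vf$ explicitly (up to the PDE solves). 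The step most likely to require care is the justification of the decomposition in the transformed coordinates together with the correct boundary conditions: one must check that the Helmholtz decomposition theorem cited in Section \ref{sec:def} applies to $\widetilde{\Db}$, and that the compactly supported, $C^2$ nature of $\vf$ is preserved under the linear change of variables so that the resulting potentials are regular enough for the argument. Once this is in place, the recovery is essentially a direct translation of the $\alpha=1$ inversion (Corollary \ref{coro: potential recovery}) into the tilde coordinates.
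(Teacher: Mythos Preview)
Your proposal is correct in spirit and follows the same overall strategy as the paper: use the lemma to compute $\widetilde{\delta}\vf$ and $\widetilde{\delta}^\perp\vf$ from the data, then solve Poisson equations in the tilde coordinates. The paper, however, takes a slightly more direct route: instead of invoking a Helmholtz decomposition $\vf = \widetilde{\nabla}\varphi + \widetilde{\nabla}^\perp\psi$ and solving for the potentials, it observes that
\[
\widetilde{\Delta} f_1 = \partial_{\widetilde{x}_1}\widetilde{\delta}\vf - \partial_{\widetilde{x}_2}\widetilde{\delta}^\perp\vf,
\qquad
\widetilde{\Delta} f_2 = \partial_{\widetilde{x}_2}\widetilde{\delta}\vf + \partial_{\widetilde{x}_1}\widetilde{\delta}^\perp\vf,
\]
and recovers the components $f_1, f_2$ directly by solving these Poisson equations with zero boundary data. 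This sidesteps the issue you flagged about justifying the decomposition theorem on the transformed domain $\widetilde{\Db}$, since the componentwise Laplacian identities are purely algebraic and the boundary conditions come straight from the compact support of $\vf$. Your route is also valid (the decomposition holds on any bounded simply connected domain), but the paper's avoids that extra verification. One small correction: the lemma gives $D_\vu D_\vv \Tc_\alpha\vf = -\,\widetilde{\delta}\vf$, so your Dirichlet problem for $\varphi$ should have right-hand side $-D_\vu D_\vv \Tc_\alpha\vf$.
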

\begin{proof}
    From equations \eqref{DuDvL_alpf} and \eqref{DuDvT_alpf}, we have
    \begin{align*}
        D_{\vu}D_{\vv}\Lc_{\alpha}\vf =  \tilde{\delta}^\perp \vf & \quad \mbox{and } \quad D_{\vu}D_{\vv}\Tc_{\alpha}\vf = - \tilde{\delta} \vf.
    \end{align*}
    The Laplace operator in new coordinates is denoted by $\displaystyle \tilde{\Delta} := \frac{\partial^2}{\partial \widetilde{x}_1^2}+\frac{\partial^2}{\partial \widetilde{x}_2^2}.$ Then, the Laplacian of each component of $\vf$ can be expressed by the following relations
    \begin{align}
         \tilde{\Delta} f_1 = \frac{\partial}{\partial \widetilde{x}_1}\tilde{\delta} \vf - \frac{\partial}{\partial \widetilde{x}_2}\tilde{\delta}^\perp \vf, \label{eq: Laplace of f1 in new coordinates}\\
        \tilde{ \Delta } f_2 = \frac{\partial}{\partial \widetilde{x}_2}\tilde{\delta} \vf + \frac{\partial}{\partial \widetilde{x}_1}\tilde{\delta}^\perp \vf.\label{eq: Laplace of f2 in new coordinates}
    \end{align}
    Using these along with zero boundary conditions, we can uniquely recover $f_1$, $f_2$, and hence $\vf$.
\end{proof}
The proof of Theorem \ref{th:f from La and Ta} provides an algorithm for recovering $\vf$ by solving Poisson equations \eqref{eq: Laplace of f1 in new coordinates} and \eqref{eq: Laplace of f2 in new coordinates} for $f_1$ and $f_2$. If $\alpha=1$, this coincides with the corresponding result of \cite{Gaik_Mohammad_Rohit}.
\subsection{Full recovery of \texorpdfstring{$\vf$}{} using  integral moments}
In this subsection, we reconstruct $\vf$ using either the combinations of  $\Lc_{\alpha}\vf, \Lc^1_{\alpha}\vf$ or $\Tc_{\alpha}\vf,\Tc^1_{\alpha}\vf$. 
\begin{thr}\label{Vector_rec from La and La1}
    Let $\vf \in C_c^2(S^1; \Db).$ Then $\vf$ can be recovered from $\Lc_{\alpha}\vf$ and $\Lc_{\alpha}^1\vf$ using explicit closed form formulas \eqref{eq:rec f1 from La and La1} and \eqref{eq:rec f2 from La and La1}  (see below).
\end{thr}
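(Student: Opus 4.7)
The plan is to mirror the decomposition strategy of Theorem \ref{th:Inversion of L and L1 V-line} (the $\alpha=1$ case), carried out in the rescaled coordinates $(\widetilde{x}_1,\widetilde{x}_2)$ of \eqref{change from x to tilde(x)}. I would write $\vf = \widetilde{\nabla}\varphi + \widetilde{\nabla}^\perp\psi$ with $\varphi|_{\partial\Db} = \psi|_{\partial\Db} = 0$, where the operators are those defined in \eqref{eq:div and curl in new coordinates}. This decomposition exists and is unique because each of $\widetilde{\Delta}\varphi = \widetilde{\delta}\vf$ and $\widetilde{\Delta}\psi = \widetilde{\delta}^\perp\vf$ is a Dirichlet problem for the constant-coefficient elliptic operator $\widetilde{\Delta} := \partial_{\widetilde{x}_1}^2 + \partial_{\widetilde{x}_2}^2$ (the pullback of the Euclidean Laplacian under the linear change \eqref{change from x to tilde(x)}). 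The explicit closed-form expressions for $f_1$ and $f_2$ are then read off from $\vf = \widetilde{\nabla}\varphi + \widetilde{\nabla}^\perp\psi$ using \eqref{eq:derivatives in new coordinates}, once $\varphi$ and $\psi$ have been reconstructed from the data.

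The first step is to recover $\psi$ from $\Lc_\alpha\vf$. By Theorem \ref{th:kernel of L_alpf}, $\Lc_\alpha(\widetilde{\nabla}\varphi)=0$, so the identity \eqref{DuDvL_alpf} reduces to
\begin{equation*}
D_\vu D_\vv \Lc_\alpha\vf \;=\; \widetilde{\delta}^\perp\vf \;=\; \widetilde{\delta}^\perp\bigl(\widetilde{\nabla}^\perp\psi\bigr) \;=\; \widetilde{\Delta}\psi,
\end{equation*}
and a Poisson solve with zero boundary data uniquely determines $\psi$.

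The second step, the core of the argument, is to recover $\varphi$ from $\Lc_\alpha^1\vf$. The key algebraic ingredients are the two identities
\begin{equation*}
\vu\cdot\widetilde{\nabla} \;=\; -2\alpha\, u_1 u_2\, D_\vu, \qquad \vv\cdot\widetilde{\nabla} \;=\; -2\, u_1 u_2\, D_\vv,
\end{equation*}
which follow by direct substitution of \eqref{eq:derivatives in new coordinates}. Inserting these into the definition \eqref{mom_longi_def} of $\Lc_\alpha^1$ applied to $\widetilde{\nabla}\varphi$ turns each integrand into a multiple of $t\,\frac{d}{dt}\varphi(\vx + t\vu)$ or $t\,\frac{d}{dt}\varphi(\vx + t\vv)$, and one integration by parts in $t$ (the boundary term vanishes because $\varphi$ is supported in $\overline{\Db}$) gives
\begin{equation*}
\Lc_\alpha^1(\widetilde{\nabla}\varphi)(\vx) \;=\; -2\alpha\, u_1 u_2\, \Vc_{-1}\varphi(\vx).
\end{equation*}
Since $\psi$ is now known, $\Lc_\alpha^1(\widetilde{\nabla}^\perp\psi)$ is an explicit computable quantity; subtracting it from $\Lc_\alpha^1\vf$ isolates $\Vc_{-1}\varphi$, and Theorem \ref{scalar_focal_rec} recovers $\varphi$. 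The prefactor $-2\alpha u_1 u_2$ is nonzero because $\alpha\neq 0$ by hypothesis, and the linear independence of $\vu=(u_1,u_2)$ and $\vv=(-u_1,u_2)$ forces both $u_1\neq 0$ and $u_2\neq 0$.

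The main obstacle is verifying the pair of identities for $\vu\cdot\widetilde{\nabla}$ and $\vv\cdot\widetilde{\nabla}$. Their clean collapse to pure directional derivatives along $\vu$ and $\vv$ respectively is precisely the property that the asymmetric coordinate transform \eqref{change from x to tilde(x)} was engineered to produce, and it is what reduces the weighted first moment on a gradient field to the signed V-line transform $\Vc_{-1}$. Once these identities are in place, every remaining step is either a Poisson solve for $\widetilde{\Delta}$ or an invocation of results already established in the paper (Theorems \ref{th:kernel of L_alpf} and \ref{scalar_focal_rec}), and the final explicit componentwise formulas for $f_1$ and $f_2$ emerge by substituting \eqref{eq:derivatives in new coordinates} into the decomposition $\vf = \widetilde{\nabla}\varphi + \widetilde{\nabla}^\perp\psi$.
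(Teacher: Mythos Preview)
Your argument is correct: the identities $\vu\cdot\widetilde{\nabla}=-2\alpha u_1u_2 D_\vu$ and $\vv\cdot\widetilde{\nabla}=-2u_1u_2 D_\vv$ check out directly from \eqref{eq:derivatives in new coordinates}, and with them the computation $\Lc_\alpha^1(\widetilde{\nabla}\varphi)=-2\alpha u_1u_2\,\Vc_{-1}\varphi$ goes through exactly as in the $\alpha=1$ proof. So you do establish recoverability of $\vf$.

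However, your route is genuinely different from the paper's. The paper never uses the tilde-decomposition $\vf=\widetilde{\nabla}\varphi+\widetilde{\nabla}^\perp\psi$ here. Instead it exploits the symmetric choice $\vu=(u_1,u_2)$, $\vv=(-u_1,u_2)$ to write $\Lc_\alpha\vf=-u_1\Vc_\alpha f_1-u_2\Vc_{-\alpha}f_2$ as a componentwise combination of scalar weighted V-line transforms, invokes the explicit inversion of $\Vc_{\pm\alpha}$ from \cite{amb-lat_2019}, and then combines this with the algebraic identity $D_\vu D_\vv\Lc_\alpha^1\vf+(D_\vu+D_\vv)\Lc_\alpha\vf=(1+\alpha)u_1f_1+(1-\alpha)u_2f_2$ to isolate $f_1$ and $f_2$ separately. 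This yields the specific closed-form expressions \eqref{eq:rec f1 from La and La1}--\eqref{eq:rec f2 from La and La1} that the theorem statement actually promises, built only from directional derivatives and divergent beam transforms $\Xc_\vw,\Xc_{\widetilde{\vw}},\Xc_\gamma$. Your method, by contrast, passes through a Poisson solve for $\widetilde{\Delta}\psi$ and a transport-equation solve for $\Vc_{-1}\varphi$ (Theorem~\ref{scalar_focal_rec}), so while it proves unique recovery and parallels the $\alpha=1$ argument cleanly, it does not produce the formulas \eqref{eq:rec f1 from La and La1}--\eqref{eq:rec f2 from La and La1} referenced in the theorem. The paper's approach buys fully explicit componentwise inversion; yours buys conceptual uniformity with Section~\ref{sec: a = 1} and a transparent explanation of why the coordinates \eqref{change from x to tilde(x)} were introduced.
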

\begin{proof}
Let us first note that under Hypothesis \ref{Hypo}, $\Lc_{\alpha}\vf$ can be expressed as follows:
\begin{align*}
\Lc_{\alpha}\vf= -\Xc_{\vu}\left(\vu\cdot\vf \right)+ \alpha \Xc_{\vv}\left(\vv\cdot\vf \right), \ \ \mbox{ where } \ \ \Xc_\vu h (\vx) := \int_0^\infty h(\vx + t\vu)dt
\end{align*}
   Since we have taken V-lines symmetric about the $y$-axis, that is, $\vu=(u_1,u_2)$ and $\vv=(-u_1,u_2)$, $\Lc_{\alpha}\vf$ can be further simplified as follows:
   \begin{align}\label{longi_alpha}
        \Lc_{\alpha}\vf= -u_1\Vc_{\alpha}f_1 - u_2 \Vc_{-\alpha}f_2.
    \end{align}
  It is known from \cite[Theorem 8]{amb-lat_2019}, that both $\Vc_\alpha$ and $\Vc_{-\alpha}$ can be inverted with explicit inversion formulas: 
   \begin{align}
       f_1 = \frac{1}{\lVert \vw_{\alpha}\rVert} D_{\vu}D_{\vv} \Xc_{\vw} \Vc_{\alpha}f_1 \quad \mbox{ and } \quad 
       f_2 = - \frac{1}{\lVert \widetilde{\vw}_{\alpha}\rVert} D_{\vu}D_{\vv} \Xc_{\widetilde{\vw}} \Vc_{-\alpha}f_2, 
   \end{align}
   where \begin{align*}
       \vw= \frac{\vw_{\alpha}}{\lVert \vw_{\alpha}\rVert},\ \ \vw_{\alpha}= (-(1-\alpha)u_1, (1+\alpha)u_2) \ \  \mbox{ and }\ \ 
       \widetilde{\vw}= \frac{\widetilde{\vw}_{\alpha}}{\lVert \widetilde{\vw}_{\alpha}\rVert},\ \  \widetilde{\vw}_{\alpha}= ((1+\alpha)u_1, -(1-\alpha)u_2).
   \end{align*}
    Applying $D_{\vu} D_{\vv}$ to $\Lc^1_{\alpha}\vf$ and using integration by parts, we get
    \begin{align*}
        D_{\vu}D_{\vv}\Lc^1_{\alpha}\vf = D_{\vv} \Xc_{\vu}(\vu \cdot \vf) - \alpha  D_{\vu} \Xc_{\vv}(\vv \cdot \vf).
    \end{align*}
    Next, using the identity $D_{\vu} + D_{\vv} = 2u_2\partial_{x_2}$, we have the following relation:
    \begin{align}\label{Relation L and L1}
     D_{\vu}D_{\vv}\Lc^1_{\alpha}\vf + (D_{\vu} + D_{\vv}) \Lc_{\alpha}\vf = (1+\alpha)u_1f_1 +(1-\alpha) u_2f_2.
    \end{align}    
    Using relation \eqref{longi_alpha} and inversion formulas for $\Vc_{\alpha},\Vc_{-\alpha}$, equation \eqref{Relation L and L1} can be rewritten as
    \begin{align}\label{D_uD_vL^1f}
    D_{\vu}D_{\vv}\Lc^1_{\alpha}\vf &= u_1\left\{ \frac{1+\alpha}{\lVert\vw_{\alpha}\rVert}  D_{\vu}D_{\vv} \Xc_{\vw} + (D_{\vu}
    + D_{\vv}) \right\} \Vc_{\alpha}f_1 \nonumber \\ & \qquad - u_2\left\{ \frac{1-\alpha}{\lVert\widetilde{\vw}_{\alpha}\rVert}  D_{\vu}D_{\vv} \Xc_{\widetilde{\vw}} - (D_{\vu} + D_{\vv}) \right\} \Vc_{-\alpha}f_2.
    \end{align}
    Applying the operator $ \displaystyle \left\{ \frac{1-\alpha}{\lVert\widetilde{\vw}_{\alpha}\rVert}  D_{\vu}D_{\vv} \Xc_{\widetilde{\vw}} - (D_{\vu} + D_{\vv}) \right\}$ to the equation \eqref{longi_alpha} and subtracting it from the equation \eqref{D_uD_vL^1f}, we obtain
    \begin{align*}
       D_{\vu}D_{\vv}\Lc^1_{\alpha}\vf - &\left\{ \frac{1-\alpha}{\lVert\widetilde{\vw}_{\alpha}\rVert}  D_{\vu}D_{\vv} \Xc_{\widetilde{\vw}} - (D_{\vu} + D_{\vv}) \right\} \Lc_{\alpha}\vf \nonumber \\ &\qquad = u_1\left\{ \frac{1+\alpha}{\lVert \vw_{\alpha}\rVert}D_{\vu}D_{\vv} \Xc_{\vw}+ \frac{1-\alpha}{\lVert \widetilde{\vw}_{\alpha}\rVert}D_{\vu}D_{\vv} \Xc_{\widetilde{\vw}} \right\}\Vc_{\alpha}f_1\\
       &\qquad = -u_1 \lVert \vw_{\alpha}\rVert \left\{ \frac{1+\alpha}{\lVert \vw_{\alpha}\rVert} \Xc_{\vw}+ \frac{1-\alpha}{\lVert \widetilde{\vw}_{\alpha}\rVert}\Xc_{\widetilde{\vw}} \right\}D_{\vw}f_1, 
    \end{align*}
    where in the last line we used the relation $D_{\vu}D_{\vv}\Vc_{\alpha}f_1= - D_{\vv}f_1 - \alpha D_{\vu}f_1= - \lVert \vw_{\alpha}\rVert D_{\vw} f_1$.
\vspace{2mm}\\
Denote by $\displaystyle C_{\vw\alpha}=\frac{1+\alpha}{\lVert\vw_{\alpha}\rVert}$, $\displaystyle C_{\widetilde{\vw}\alpha}= \frac{1-\alpha}{\lVert\widetilde{\vw}_{\alpha}\rVert}$ and $\displaystyle \gamma= \frac{C_{\vw\alpha}\widetilde{\vw}+ C_{\widetilde{\vw}\alpha}\vw}{\lVert C_{\vw\alpha}\widetilde{\vw}+ C_{\widetilde{\vw}\alpha}\vw\rVert}$. With these notations and using the inversion of the weighted V-line transform, we get
    \begin{align}
        D_{\vw}f_1= -\frac{1}{u_1}\frac{1}{\lVert \vw_{\alpha}\rVert}\frac{1}{\lVert C_{\vw\alpha}\widetilde{\vw}+ C_{\widetilde{\vw}\alpha}\vw\rVert}D_{\vw}D_{\widetilde{\vw}}\Xc_{\gamma}\left[ D_{\vu}D_{\vv}\Lc^1_{\alpha}\vf - \left\{ \frac{1-\alpha}{\lVert\widetilde{\vw}_{\alpha}\rVert}  D_{\vu}D_{\vv} \Xc_{\widetilde{\vw}} - (D_{\vu} + D_{\vv}) \right\} \Lc_{\alpha}\vf\right],
    \end{align}
    Integrating the equation along $\vw$, we have
    \begin{align}\label{eq:rec f1 from La and La1}
        f_1= -\frac{1}{u_1}\frac{1}{\lVert \vw_{\alpha}\rVert}\frac{1}{\lVert C_{\vw\alpha}\widetilde{\vw}+ C_{\widetilde{\vw}\alpha}\vw\rVert}D_{\widetilde{\vw}}\Xc_{\gamma}\left[ D_{\vu}D_{\vv}\Lc^1_{\alpha}\vf - \left\{ \frac{1-\alpha}{\lVert\widetilde{\vw}_{\alpha}\rVert}  D_{\vu}D_{\vv} \Xc_{\widetilde{\vw}} - (D_{\vu} + D_{\vv}) \right\} \Lc_{\alpha}\vf\right].
    \end{align}
Similarly applying the operator $\displaystyle \left\{ \frac{1+\alpha}{\lVert\vw_{\alpha}\rVert}  D_{\vu}D_{\vv} \Xc_{\vw} + (D_{\vu} + D_{\vv}) \right\}$ to equation \eqref{longi_alpha} and then adding it to equation \eqref{D_uD_vL^1f}, we get
\begin{align}
    D_{\vu}D_{\vv}\Lc^1_{\alpha}\vf &+ \left\{ \frac{1+\alpha}{\lVert\vw_{\alpha}\rVert}  D_{\vu}D_{\vv} \Xc_{\vw} + (D_{\vu} + D_{\vv}) \right\} \Lc_{\alpha}\vf \nonumber \\ &\qquad = -u_2 \lVert \widetilde{\vw}_{\alpha}\rVert \left\{ \frac{1+\alpha}{\lVert \vw_{\alpha}\rVert} \Xc_{\vw}+ \frac{1-\alpha}{\lVert \widetilde{\vw}_{\alpha}\rVert}\Xc_{\widetilde{\vw}} \right\}D_{\widetilde{\vw}}f_2.
\end{align}
Using the inversion of the weighted V-line transform, we get
    \begin{align}
        D_{\widetilde{\vw}}f_2= -\frac{1}{u_2}\frac{1}{\lVert \widetilde{\vw}_{\alpha}\rVert}\frac{1}{\lVert C_{\vw\alpha}\widetilde{\vw}+ C_{\widetilde{\vw}\alpha}\vw\rVert}D_{\vw}D_{\widetilde{\vw}}\Xc_{\gamma}\left[ D_{\vu}D_{\vv}\Lc^1_{\alpha}\vf + \left\{ \frac{1+\alpha}{\lVert\vw_{\alpha}\rVert}  D_{\vu}D_{\vv} \Xc_{\vw} + (D_{\vu} + D_{\vv}) \right\} \Lc_{\alpha}\vf\right].
    \end{align}
    Next, integrating the above equation in the direction $\widetilde{\vw},$ we obtain
    \begin{align}\label{eq:rec f2 from La and La1}
        f_2= -\frac{1}{u_2}\frac{1}{\lVert \widetilde{\vw}_{\alpha}\rVert}\frac{1}{\lVert C_{\vw \alpha}\widetilde{\vw}+ C_{\widetilde{\vw}\alpha}\vw\rVert}D_{\vw}\Xc_{\gamma}\left[ D_{\vu}D_{\vv}\Lc^1_{\alpha}\vf + \left\{ \frac{1+\alpha}{\lVert\vw_{\alpha}\rVert}  D_{\vu}D_{\vv} \Xc_{\vw} + (D_{\vu} + D_{\vv}) \right\} \Lc_{\alpha}\vf\right].
    \end{align}
Thus, $\vf$ can be recovered from the knowledge of $\Lc_{\alpha}\vf$ and $\Lc^1_{\alpha}\vf.$
\end{proof}
   
\begin{thr}\label{Vector_rec from Ta and Ta1}
    Let $\vf \in C_c^2(S^1; \Db).$ Then $\vf$ can be recovered from $\Tc_{\alpha}\vf$ and $\Tc_{\alpha}^1\vf$ using explicit closed form formulas \eqref{eq:rec f1 from Ta and Ta1} and \eqref{eq:rec f2 from Ta and Ta1} (given below).
\end{thr}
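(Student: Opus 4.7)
The plan is to mirror the proof of Theorem \ref{Vector_rec from La and La1}, exploiting the fact that $\Tc_{\alpha}\vf$ and $\Tc_{\alpha}^{1}\vf$ admit component decompositions analogous to those used in the longitudinal case. With the symmetric choice $\vu=(u_1,u_2)$, $\vv=(-u_1,u_2)$ so that $\vu^{\perp}=(-u_2,u_1)$ and $\vv^{\perp}=(-u_2,-u_1)$, a direct calculation yields
\begin{equation}\label{eq:T-decomposition}
\Tc_{\alpha}\vf = u_2\,\Vc_{-\alpha}f_1 \,-\, u_1\,\Vc_{\alpha}f_2,
\end{equation}
which is the transverse analogue of the identity $\Lc_{\alpha}\vf=-u_1\Vc_{\alpha}f_1-u_2\Vc_{-\alpha}f_2$ used in the proof of Theorem \ref{Vector_rec from La and La1}. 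The same calculation with the extra factor $t$ inside the integrals gives $\Tc_{\alpha}^{1}\vf=u_2\,\Vc_{-\alpha}^{1}f_1-u_1\,\Vc_{\alpha}^{1}f_2$, where $\Vc_{\alpha}^{1}$ denotes the first moment weighted V-line transform for scalars.

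Next, using the constancy of $\vu,\vv$ and integration by parts (specifically $D_{\vu}\Xc_{\vu}h=-h$ and the consequence $D_{\vu}D_{\vv}\Vc_{\alpha}^{1}h=-(D_{\vu}+D_{\vv})\Vc_{\alpha}h-(1+\alpha)h$, proved by differentiating under the integral), one obtains the transverse analogue of equation \eqref{Relation L and L1}:
\begin{equation}\label{eq:key-transverse}
D_{\vu}D_{\vv}\Tc_{\alpha}^{1}\vf + (D_{\vu}+D_{\vv})\Tc_{\alpha}\vf = -(1-\alpha)u_2 f_1 + (1+\alpha)u_1 f_2.
\end{equation}
Together with \eqref{eq:T-decomposition}, this gives two relations linking the data to $(f_1,f_2)$ via applications of $\Vc_{\pm\alpha}$.

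The third step is to invoke the inversion of the scalar V-line transform from \cite[Theorem 8]{amb-lat_2019} to write $f_1$ (resp.\ $f_2$) as an explicit differential–integral expression in $\Vc_{-\alpha}f_1$ (resp.\ $\Vc_{\alpha}f_2$), using the directions $\vw$, $\widetilde{\vw}$ and normalization constants $\lVert\vw_{\alpha}\rVert$, $\lVert\widetilde{\vw}_{\alpha}\rVert$ defined in the proof of Theorem \ref{Vector_rec from La and La1}. Substituting these into \eqref{eq:T-decomposition} and combining with \eqref{eq:key-transverse}, one applies suitable linear combinations of operators of the form $\{C_{\vw\alpha}D_{\vu}D_{\vv}\Xc_{\vw}\pm(D_{\vu}+D_{\vv})\}$ to eliminate either $\Vc_{-\alpha}f_1$ or $\Vc_{\alpha}f_2$, producing closed-form expressions for $D_{\widetilde{\vw}}f_1$ and $D_{\vw}f_2$. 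A final integration along $\widetilde{\vw}$ and $\vw$ respectively then yields the desired reconstruction formulas \eqref{eq:rec f1 from Ta and Ta1} and \eqref{eq:rec f2 from Ta and Ta1}.

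The main obstacle will not be conceptual but algebraic: keeping track of the sign flips between the $\alpha$ and $-\alpha$ V-line transforms, the direction vectors $\vw_{\alpha}$, $\widetilde{\vw}_{\alpha}$, and the auxiliary direction $\gamma$, so that the coefficients line up cleanly in the final formulas. The key simplification is the identity $D_{\vu}D_{\vv}\Vc_{\pm\alpha}h=\mp D_{\vw_{\pm\alpha}}h$, which allows each intermediate expression to be reduced to a directional derivative that is invertible via the scalar inversion $h\mapsto \Xc_{\cdot}D_{\vu}D_{\vv}$. Everything else is a direct parallel of the longitudinal argument.
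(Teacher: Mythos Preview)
Your proposal is correct and, once unpacked, coincides with the paper's argument. The only difference is presentational: the paper does not redo the longitudinal computation but instead invokes the identities $\Tc_{\alpha}\vf=-\Lc_{\alpha}\vf^{\perp}$ and $\Tc_{\alpha}^{1}\vf=-\Lc_{\alpha}^{1}\vf^{\perp}$ (with $\vf^{\perp}=(-f_2,f_1)$), applies Theorem~\ref{Vector_rec from La and La1} to $\vf^{\perp}$, and reads off \eqref{eq:rec f1 from Ta and Ta1}--\eqref{eq:rec f2 from Ta and Ta1} by swapping components and signs. Your decomposition \eqref{eq:T-decomposition} and key relation \eqref{eq:key-transverse} are exactly what that substitution produces, so the two routes are the same computation; the paper's shortcut simply avoids the bookkeeping you flag as the ``main obstacle''.
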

\begin{proof}
    Note that $\Tc_{\alpha}\vf = - \Lc_{\alpha}\vf^\perp$ and $\Tc^1_{\alpha}\vf = - \Lc^1_{\alpha}\vf^\perp.$ Using the similar procedure as above we can recover $\vf$ from $\Tc_{\alpha}\vf$ and $\Tc^1_{\alpha}\vf$ as given below
    \begin{align}
        f_1 &= \frac{1}{u_2}\frac{1}{\lVert \widetilde{\vw}_{\alpha}\rVert}\frac{1}{\lVert C_{\vw \alpha}\widetilde{\vw}+ C_{\widetilde{\vw}\alpha}\vw\rVert}D_{\vw}\Xc_{\gamma}\left[ D_{\vu}D_{\vv}\Tc^1_{\alpha}\vf + \left\{ \frac{1+\alpha}{\lVert\vw_{\alpha}\rVert}  D_{\vu}D_{\vv} \Xc_{\vw} + (D_{\vu} + D_{\vv}) \right\} \Tc_{\alpha}\vf\right]\label{eq:rec f1 from Ta and Ta1}\\
        f_2 &= -\frac{1}{u_1}\frac{1}{\lVert \vw_{\alpha}\rVert}\frac{1}{\lVert C_{\vw\alpha}\widetilde{\vw}+ C_{\widetilde{\vw}\alpha}\vw\rVert}D_{\widetilde{\vw}}\Xc_{\gamma}\left[ D_{\vu}D_{\vv}\Tc^1_{\alpha}\vf - \left\{ \frac{1-\alpha}{\lVert\widetilde{\vw}_{\alpha}\rVert}  D_{\vu}D_{\vv} \Xc_{\widetilde{\vw}} - (D_{\vu} + D_{\vv}) \right\} \Tc_{\alpha}\vf\right].\label{eq:rec f2 from Ta and Ta1}
    \end{align}
\end{proof}
\section{Open questions and future work}\label{sec:remarks}
\begin{enumerate}
    \item In this article, we studied a set of weighted V-line transforms acting on scalar functions and vector fields in $\Rb^2$, assuming that the integral curves of the branch vector fields of the V-lines are straight line segments inside the bounded image domain. 
    For transforms acting on vector fields, we presented inversion methods and kernel descriptions when the branch vector fields $\vu$ and $\vv$ are constant, or when the weight $\alpha$ is either 0 or 1. The reconstruction of a vector field with arbitrary $\alpha$ and non-constant vector fields $\vu$ and $\vv$, whose integral curves are straight line segments, is still an open problem. We hope that the techniques introduced here will be helpful in analyzing this general case as well and plan to address it in a future work. 
    \item The numerical implementation of inversion formulas presented here is an interesting and challenging task of its own. A set of algorithms dealing with previously examined particular cases were implemented in a recent work \cite{Gaik_Lat_Rohit_numerics}, and numerical reconstructions were stable and robust. We expect to obtain reconstructions of comparable quality using the methods developed in this paper and plan to address that in a future project. 
    \item One may extend the definition of the V-line transforms to higher-order tensor fields and ask similar questions about the kernel descriptions and invertibility. For special cases of $\alpha=1$ and constant vector fields $\vu$, $\vv$, this problem is considered in recent works \cite{Gaik_Indrani_Rohit} and \cite{Gaik_Indrani_Rohit_numerics}. One of our future goals is to extend the results of those works to weighted VLTs with non-constant vector fields $\vu$ and $\vv$, whose integral curves are straight line segments inside a bounded domain. 
\end{enumerate}
\section{Acknowledgements}\label{sec:acknowledge}
GA was partially supported by the NIH grant U01-EB029826. IZ was supported by the Prime Minister's Research Fellowship from the Government of India.
\bibliography{references}
\bibliographystyle{plain}

\end{document}